\definecolor{ao}{rgb}{0.13, 0.55, 0.13}
\newtheorem{theorem}{Theorem}[section]
\newtheorem{corollary}[theorem]{Corollary}
\newtheorem{proposition}[theorem]{Proposition}
\newtheorem{lemma}[theorem]{Lemma}
\theoremstyle{definition}
\theoremstyle{definition}
\newtheorem{remark}[theorem]{Remark}
\newcommand{\F}{\mathbb{F}}
\newcommand{\R}{\mathbb{R}}
\newcommand{\xistd}{\xi_{\mathrm{std}}}
\newcommand{\alphastd}{\alpha_{\mathrm{std}}}
\newcommand{\llink}{\Lambda}
\newcommand{\lcob}{L}
\newcommand{\llinkb}{\llink_{-}}
\newcommand{\llinkt}{\llink_{+}}
\newcommand{\llinki}{\llink_{i}}
\newcommand{\tb}{\mathit{tb}}
\newcommand{\rot}{\mathit{r}}
\newcommand{\cmark}{\color{ao}\ding{51}}
\newcommand{\xmark}{\color{red}\ding{55}}
\newcommand{\GHt}{\widetilde{\mathrm{GH}}}
\newcommand{\GHm}{\mathrm{GH}^-}
\newcommand{\GCh}{\widehat{\mathrm{GC}}}
\newcommand{\GCm}{\mathrm{GC}^-}
\newcommand{\GCt}{\widetilde{\mathrm{GC}}}
\newcommand{\GHh}{\widehat{\mathrm{GH}}}
\newcommand{\HFKh}{\widehat{\mathrm{HFK}}}
\newcommand{\HFKm}{\mathrm{HFK}^-}
\newcommand{\AB}{\mathrm{AB}}
\newcommand{\AN}{\mathrm{AN}}
\newcommand{\NB}{\mathrm{NB}}
\newcommand{\NN}{\mathrm{NN}}
\newcommand{\ABt}{\widetilde{\mathrm{AB}}}
\newcommand{\ANt}{\widetilde{\mathrm{AN}}}
\newcommand{\NBt}{\widetilde{\mathrm{NB}}}
\newcommand{\NNt}{\widetilde{\mathrm{NN}}}
\newcommand{\gd}{\mathbb{G}}
\newcommand{\gdb}{\mathbb{G}_-}
\newcommand{\gdt}{\mathbb{G}_+}
\newcommand{\gdo}{\mathbb{O}}
\newcommand{\gdx}{\mathbb{X}}
\newcommand{\Phit}{\Phi}
\newcommand{\lossh}{\widehat{\mathfrak{L}}}
\newcommand{\lgridhp}{\widehat\lambda^+}
\newcommand{\lgridhm}{\widehat\lambda^-}
\newcommand{\lgridhpm}{\widehat\lambda^\pm}
\newcommand{\lgridtp}{\widetilde\lambda^+}
\newcommand{\lgridtm}{\widetilde\lambda^-}
\newcommand{\lgridtpm}{\widetilde\lambda^\pm}
\renewcommand{\SS}{\mathbf{S}}
\newcommand{\Rect}{\mathrm{Rect}}
\newcommand{\Recto}{\mathrm{Rect^o}}
\newcommand{\Pent}{\mathrm{Pent}}
\newcommand{\Pento}{\mathrm{Pent^o}}
\newcommand{\Tri}{\mathrm{Tri}}
\newcommand{\Trio}{\mathrm{Tri^o}}
\newcommand{\bx}{\mathbf{x}}
\newcommand{\by}{\mathbf{y}}
\newcommand{\xp}{\mathbf{x}^+}
\newcommand{\xm}{\mathbf{x}^-}
\newcommand{\xpm}{\mathbf{x}^\pm}
\begin{document}

\title[Lagrangian cobordisms and  Legendrian invariants]{Lagrangian cobordisms and  Legendrian invariants in knot Floer homology}

\author[John A. Baldwin]{John A. Baldwin}
\address{Department of Mathematics \\ Boston College}
\email{\href{mailto:john.baldwin@bc.edu}{john.baldwin@bc.edu}}
\urladdr{\href{https://www2.bc.edu/john-baldwin}{https://www2.bc.edu/john-baldwin}}

\author[Tye Lidman]{Tye Lidman}
\address{Department of Mathematics \\ North Carolina State University }
\email{\href{mailto:tlid@math.ncsu.edu}{tlid@math.ncsu.edu}}
\urladdr{\url{http://www4.ncsu.edu/~tlidman}}

\author[C.-M. Michael Wong]{C.-M. Michael Wong}
\address{Department of Mathematics \\ Louisiana State University}
\email{\href{mailto:cmmwong@lsu.edu}{cmmwong@lsu.edu}}
\urladdr{\url{http://www.math.lsu.edu/~cmmwong}}

\thanks{JAB was partially supported by NSF CAREER Grant DMS-1454865.\\
\indent TL was partially supported by NSF DMS-1709702 and a Sloan Fellowship.}



\begin{abstract}
We prove that  the  LOSS and GRID invariants of Legendrian links in knot Floer homology behave in certain functorial ways  with respect to  decomposable Lagrangian cobordisms in the symplectization of the standard  contact structure on $\R^3$. Our results give new, computable, and effective  obstructions to the existence of such cobordisms.
\end{abstract}

\maketitle

\section{Introduction}

Let $\xistd$ be the standard  contact structure on $\R^3$, given by the kernel of the $1$-form \[\alphastd=dz-ydx. \] 
A     difficult problem in contact and symplectic geometry, which has attracted a great deal of attention in recent years,  is to decide, given two Legendrian links \[\llinkb,\llinkt \subset (\R^3,\xistd),\] whether there exists an exact Lagrangian cobordism  from $\llinkb$ to $\llinkt$ in the symplectization \[(\R_t\times\R^3,d(e^t\alphastd)).\] In the smooth category,  any two  links are cobordant in $\R\times\R^3$, and the challenge is to determine the minimum genus among  such cobordisms. The opposite is true in the Lagrangian setting, where the existence  of an exact Lagrangian cobordism  is constrained but its genus is completely determined by the  classical Thurston--Bennequin and rotation numbers of the Legendrian   links at the ends. Indeed, Chantraine showed in \cite{Cha10:LagConc}  that if  $L$ is an exact Lagrangian cobordism from $\llinkb$ to $\llinkt$,  then   
\begin{equation}
\label{eqn:tbr}\tb(\llinkt)-\tb(\llinkb) = -\chi(L) \textrm{ and } \rot(\llinkt) = \rot(\llinkb).\end{equation} 
An important  goal, therefore, is to develop obstructions to the existence of exact Lagrangian cobordisms that are \emph{effective}, meaning that they can obstruct such cobordisms where smooth topology and the classical invariants do not. 

In this article, we   restrict our attention to \emph{decomposable} Lagrangian cobordisms,  which are those that can be obtained as compositions of  elementary cobordisms associated to Legendrian isotopies, pinches, and births, as shown  in Figure \ref{fig:decomposable}. Decomposable  cobordisms are exact, and constitute most known examples of exact Lagrangian cobordisms. It is  open whether all exact Lagrangian cobordisms are decomposable.

Our main result, described in Sections \ref{ssec:obstructions}-\ref{ssec:examples}, is that knot Floer homology provides effective  obstructions to  decomposable Lagrangian cobordisms. Symplectic Field Theory also furnishes  various   obstructions to exact Lagrangian cobordisms; see \cite{EHK,CorNgSiv16:LagConcObstructions, Pan,CDGGG,ST}. One advantage of our knot Floer   obstructions is that they are generally much  easier to compute  than those coming from  SFT. Moreover, we  show  that knot Floer homology  obstructs decomposable cobordisms in cases where  the SFT invariants do not (and vice versa).

As   discussed in Section \ref{ssec:antecedents}, there is  an existing   body of  work \cite{BalSiv14:KHMLeg, BalSiv18:EqInv,GolJuh18:LOSSConc} showing that knot Floer homology effectively obstructs  Lagrangian cobordisms of genus-zero in various settings. Ours is the first result that shows that  knot Floer homology can effectively obstruct Lagrangian cobordisms of \emph{positive} genus.

\subsection{Obstructions}
\label{ssec:obstructions}

In \cite{OzsSzaThu08:GRID}, Ozsv{\'a}th, Szab{\'o}, and Thurston used the combinatorial  grid diagram formulation of knot Floer homology \cite{ManOzsSar09:GH} to define  invariants of Legendrian links in $ (\R^3,\xistd)$. These so-called GRID invariants assign to  such a Legendrian  link  $\llink$ two  elements in  the \emph{hat} flavor of the knot Floer homology of $\llink\subset -S^3$,\footnote{The GRID invariant is defined for knots in $S^3$, but a Legendrian knot in $(\R^3,\xistd)$ can be viewed naturally as a Legendrian in the standard contact structure on $S^3$.  We follow the conventions of \cite{OzsSzaThu08:GRID} and view these invariants as living in $\HFKh(S^3,m(\Lambda))$, which we identify with $\HFKh(-S^3, \Lambda)$.}
\[\lgridhp(\llink),\lgridhm(\llink)\in\HFKh(-S^3,\llink),\footnote{There are also versions of the GRID invariants in the more general \emph{minus} flavor.}
\] which depend only on the Legendrian isotopy class of $\llink$. These elements are  \emph{effective} invariants in  that they can distinguish Legendrian links that are not isotopic but have the same classical invariants (see \cite{NgOzsThu08:GRIDEffective}, for example), and are combinatorially computable.

\begin{remark}
The  Maslov (or Alexander) gradings of the  classes $\lgridhpm(\llink)$   recover the Thurston--Bennequin and rotation numbers  of $\llink$ (see Section \ref{ssec:hfkgrid}).\end{remark}


We prove that the GRID invariants are well-behaved under decomposable Lagrangian cobordisms. As explained in Section \ref{ssec:examples}, this provides effective obstructions to such cobordisms.

\begin{theorem}
\label{thm:main} Suppose $\llinkb,\llinkt$ are Legendrian links in $(\R^3,\xistd)$ such that either
\begin{itemize}
\item $\lgridhp(\llinkt) = 0$ and $\lgridhp(\llinkb) \neq 0$, or
\item $\lgridhm(\llinkt) = 0$ and $\lgridhm(\llinkb) \neq 0$.
\end{itemize} Then there is no decomposable Lagrangian cobordism from $\llinkb$ to $\llinkt$.
\end{theorem}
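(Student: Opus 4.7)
The plan is to prove the stronger functoriality statement: to any decomposable Lagrangian cobordism $L$ from $\llinkb$ to $\llinkt$, associate a map
\[\Phi_L \colon \HFKh(-S^3, \llinkt) \to \HFKh(-S^3, \llinkb)\]
satisfying $\Phi_L(\lgridhp(\llinkt)) = \lgridhp(\llinkb)$ and $\Phi_L(\lgridhm(\llinkt)) = \lgridhm(\llinkb)$. The theorem is then immediate: if $\lgridhp(\llinkt) = 0$, then applying $\Phi_L$ yields $\lgridhp(\llinkb) = 0$, contradicting the hypothesis; the minus version is the same. Note that the direction of $\Phi_L$ is essential---it runs from the top of the cobordism to the bottom, as is standard for Legendrian-type invariants in Floer theory.

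A decomposable cobordism is, by definition, a composition of elementary pieces of three types: Legendrian isotopies, pinches (saddles), and births of a standard $\tb=-1$ unknot. I would construct $\Phi_L$ by assigning a chain-level map to each elementary piece and composing. For a Legendrian isotopy, the required map is already implicit in the chain-level proofs of Legendrian invariance of $\lgridhpm$ by Ozsv{\'a}th--Szab{\'o}--Thurston: the isotopy is realized by a finite sequence of grid commutations and of (de)stabilizations of Legendrian-compatible type, each inducing a quasi-isomorphism on $\GCh$ that carries the distinguished GRID cycle to the corresponding distinguished cycle. For a birth producing $\llinkt = \llinkb \sqcup U$, insert a $2 \times 2$ block for the new unknot into a free corner of a grid for $\llinkb$; under the induced splitting $\GCh(\llinkt) \cong \GCh(\llinkb) \otimes \GCh(U)$, the GRID cycle factors as $\lgridhpm(\llinkb) \otimes c$ for a canonical generator $c$, and the desired chain map is projection onto the $\lgridhpm(\llinkb)$ summand.

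The heart of the argument, and the main obstacle, is the pinch case. Here I would present both $\llinkb$ and $\llinkt$ on a common grid torus by juxtaposing two sets of markings in a neighborhood of the pinch site, and define the required chain map $\GCh(\llinkt) \to \GCh(\llinkb)$ by counting empty pentagons (or triangles) with prescribed corners, in the spirit of the Manolescu--Ozsv{\'a}th--Sarkar chain maps used for grid moves. Two verifications are needed: that the polygon count defines a chain map of the bigrading shift dictated by the Chantraine relations $\tb(\llinkt) - \tb(\llinkb) = -\chi(L)$ and $\rot(\llinkt) = \rot(\llinkb)$ specialized to a $1$-handle; and---crucially---that it sends the distinguished GRID cycle upstairs to the distinguished cycle downstairs, via the unique small polygon emanating from the pinch region. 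Composing the elementary maps yields $\Phi_L$, and preservation of $\lgridhpm$ assembles stage-by-stage into preservation under $\Phi_L$. I expect the polygon count and the cycle-preservation verification in the pinch case to be the genuinely novel technical input; the isotopy and birth cases should follow from existing chain-level arguments, and the final deduction is formal.
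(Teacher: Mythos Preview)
Your overall strategy---construct maps for each elementary piece (isotopy, pinch, birth), verify each preserves the distinguished cycle, and compose---is exactly the paper's approach, and your description of the pinch map (pentagon/triangle counts on a combined diagram) matches the paper's construction closely. Two points of divergence are worth flagging.

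First, the paper carries out the construction in the \emph{tilde} flavor $\GHt$ rather than in $\GHh$: the elementary maps are defined on $\GCt$, shown to send $\lgridtpm(\gdt)$ to $\lgridtpm(\gdb)$, and only at the very end is the hat statement deduced from the equivalence $\lgridhpm(\gd)=0 \iff \lgridtpm(\gd)=0$ (via the injection $j_*\colon \GHh\hookrightarrow\GHt$). Working in tilde avoids bookkeeping with the per-component choice of $O$ markings that defines $\GCh$, which is awkward precisely because pinches and births change the number of components.

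Second, your birth map is not correct as written. There is no ``free corner'' of a grid, and after inserting two new rows and columns for the unknot, the resulting complex is \emph{not} a tensor product $\GCt(\gdb)\otimes\GCt(U)$: generators are bijections on all $n+2$ circles and need not respect the old/new partition, and the differential mixes the two. The paper instead inserts the $2\times 2$ unknot block adjacent to an existing $X$ marking, decomposes $\SS(\gdt)$ according to whether the two new interior intersection points $a,b$ lie in the generator, identifies a subquotient $\ABt$ isomorphic to $\GCt(\gdb)$, and builds the chain map as a composite $e\circ\psi\circ\Pi$ where $\psi$ counts rectangles that pass through all four new markings. So the birth case, which you anticipated would be routine, in fact requires roughly as much care as the pinch case.
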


Theorem \ref{thm:main} has the following corollaries.

\begin{corollary}
\label{cor:lagfilling}
Suppose $\llink$ is a Legendrian link in $(\R^3,\xistd)$ such that either
\[\lgridhp(\llink) = 0 \textrm{ or }
\lgridhm(\llink) = 0.\]
 Then there is no decomposable Lagrangian filling of $\llink$.
\end{corollary}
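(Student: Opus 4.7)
My plan is to deduce Corollary \ref{cor:lagfilling} directly from Theorem \ref{thm:main} by identifying a decomposable Lagrangian filling of $\llink$ with a decomposable Lagrangian cobordism whose lower end is a standard Legendrian unknot. By definition, a Lagrangian filling is a Lagrangian cobordism from the empty Legendrian $\emptyset$ to $\llink$. If the filling is decomposable, its decomposition into elementary pieces must begin with an elementary cobordism whose negative end is empty; of the three allowed elementary moves (Legendrian isotopies, pinches, births), only a birth has empty negative end. Thus the first stage of the decomposition introduces a standard Legendrian unknot $\llink_0$ with $\tb(\llink_0) = -1$, and composing the remaining pieces yields a decomposable Lagrangian cobordism from $\llink_0$ to $\llink$.

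The essential computational input is that both $\lgridhp(\llink_0)$ and $\lgridhm(\llink_0)$ are nonzero in $\HFKh(-S^3,\llink_0)$. This is a foundational calculation of Ozsv\'ath--Szab\'o--Thurston from \cite{OzsSzaThu08:GRID}, carried out directly on the $2\times 2$ grid representing $\llink_0$: the knot Floer homology of the unknot is one-dimensional in the relevant bigrading, and each GRID generator represents the unique nonzero class. I expect no difficulty in simply invoking this fact.

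Given these two ingredients, the corollary follows by contraposition from Theorem \ref{thm:main}: if $\lgridhp(\llink) = 0$ (respectively, $\lgridhm(\llink) = 0$), then since $\lgridhp(\llink_0) \neq 0$ (respectively, $\lgridhm(\llink_0) \neq 0$), Theorem \ref{thm:main} applied to the pair $(\llinkb,\llinkt) = (\llink_0,\llink)$ rules out a decomposable Lagrangian cobordism between them, contradicting the existence of the filling. I do not anticipate any genuine obstacle; the only point worth flagging in the written proof is the elementary but essential observation that a decomposable filling must begin with a birth move.
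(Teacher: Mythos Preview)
Your proposal is correct and follows essentially the same approach as the paper: the paper likewise observes that a decomposable filling must begin with a birth, hence yields a decomposable Lagrangian cobordism from the $tb=-1$ Legendrian unknot $\llink_U$ to $\llink$, and then invokes Theorem~\ref{thm:main} together with the nonvanishing of $\lgridhpm(\llink_U)$.
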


As explained in Section \ref{ssec:proofs}, this follows from the fact that the GRID invariants are nonzero for the $tb=-1$ Legendrian unknot.

\begin{corollary}
\label{cor:stab}
Suppose $\llinkb,\llinkt$ are Legendrian links in $(\R^3,\xistd)$ such that either
\begin{itemize}
\item $\lgridhp(\llinkb) \neq 0$ and $\llinkt$ is the positive stabilization of a Legendrian link, or
\item $\lgridhm(\llinkb) \neq 0$ and $\llinkt$ is the negative stabilization of a Legendrian link.
\end{itemize} Then there is no decomposable Lagrangian cobordism from $\llinkb$ to $\llinkt$.
\end{corollary}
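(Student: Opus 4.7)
The plan is to reduce Corollary \ref{cor:stab} directly to Theorem \ref{thm:main} by invoking the known stabilization behavior of the GRID invariants. Recall from \cite{OzsSzaThu08:GRID} that positive Legendrian stabilization kills $\lgridhp$ and negative Legendrian stabilization kills $\lgridhm$; concretely, if $\llink'$ is any Legendrian link and $S_+(\llink')$, $S_-(\llink')$ denote its positive and negative stabilizations, then
\[
\lgridhp(S_+(\llink')) = 0 \quad \text{and} \quad \lgridhm(S_-(\llink')) = 0.
\]
This is a combinatorial computation on grid diagrams: stabilization corresponds to a prescribed local move on the grid, and after this move the generators $\xp$ and $\xm$ that represent the invariants become boundaries (respectively) in the hat complex.

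Given this, the corollary is immediate. Suppose we are in the first case, so $\lgridhp(\llinkb) \neq 0$ and $\llinkt = S_+(\llink')$ for some Legendrian link $\llink'$. Then the stabilization fact gives $\lgridhp(\llinkt) = 0$, so the hypotheses of Theorem \ref{thm:main} are satisfied, and we conclude that there is no decomposable Lagrangian cobordism from $\llinkb$ to $\llinkt$. The second case is entirely analogous, replacing $\lgridhp$ with $\lgridhm$ and positive with negative stabilization.

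There is no real obstacle here, since the only nontrivial ingredient — vanishing of the GRID invariants under the corresponding stabilization — is established in \cite{OzsSzaThu08:GRID}. The content of Corollary \ref{cor:stab} lies in Theorem \ref{thm:main}; once that functoriality is in hand, the corollary is a one-line consequence.
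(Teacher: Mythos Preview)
Your proof is correct and matches the paper's own argument essentially verbatim: the paper also derives the corollary immediately from Theorem~\ref{thm:main} together with the vanishing of $\lgridhp$ (resp.\ $\lgridhm$) under positive (resp.\ negative) stabilization, recorded as Proposition~\ref{prop:stab} and originally proved in \cite{OzsSzaThu08:GRID}.
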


This follows immediately from the fact (see Proposition \ref{prop:stab}) that the elements $\lgridhp$ and $\lgridhm$ vanish for positively and negatively stabilized Legendrian links, respectively.


In \cite{LisOzsSti09:LOSS}, Lisca, Ozsv{\'a}th, Stipsicz, and Szab{\'o} used open book decompositions to define a knot Floer  invariant of Legendrian \emph{knots} in any closed contact 3-manifold. For  a Legendrian knot $\llink\subset (\R^3,\xistd)$, their so-called LOSS invariant also  takes the form of an element  \[\lossh(\llink)\in\HFKh(-S^3,\llink).\] Although the LOSS invariant is not algorithmically computable, Baldwin, Vela-Vick, and V{\'e}rtesi proved in  \cite{BalVelVer13:BRAID} that it agrees with the GRID invariants, for  Legendrian knots in $(\R^3,\xistd)$. More precisely, given such a knot $\llink$, there are isomorphisms
\[
\phi_\pm:\HFKh(-S^3,\llink)\to\HFKh(-S^3,\pm\llink)
\]
such that  \[\phi_\pm(\lgridhpm(\llink))=\lossh(\pm\llink).\] This   gives   corresponding versions of Theorem \ref{thm:main} and its corollaries   for the LOSS invariant.

\subsection{Proof}
\label{ssec:proof} Theorem \ref{thm:main} follows from  a similar result for the \emph{tilde} version of the GRID invariants. We explain this below after  providing a bit of additional  background on the construction of the GRID invariants (see Section \ref{ssec:hfkgrid} for details).

A Legendrian link $\llink \subset (\R^3,\xistd)$ can be represented by a grid diagram $\gd$. This grid diagram determines a  combinatorially computable, bigraded chain complex   whose \emph{grid homology} agrees with the knot Floer homology of $\llink\subset -S^3$, \[\GHh(\gd) \cong \HFKh(-S^3,\llink).
\] There are two canonical cycles in this  grid chain complex, representing elements \[\lgridhp(\gd),\lgridhm(\gd)\in\GHh(\gd).\] The  \emph{hat} version of the GRID invariants discussed previously are defined by  \[\lgridhpm(\llink) :=  \lgridhpm(\gd).\] A specialization of this  chain complex gives rise to the \emph{tilde}  version of grid homology, which agrees with the \emph{tilde} flavor of knot Floer homology, and is related to the \emph{hat} flavor by \[\GHt(\gd) \cong \HFKh(-S^3,\llink)\otimes V^{\otimes |\gd|-|\llink|},\]  where \[V= \F_{0,0} \oplus \F_{-1,-1}\] is the two-dimensional vector space supported in the Maslov--Alexander bigradings indicated by the subscripts; $|\gd|$ is  the grid number of $\gd$; and $|\llink|$ is the number of components of $\llink$.  There are two canonical elements in this version of grid homology as well, \[\lgridtp(\gd),\lgridtm(\gd)\in\GHt(\gd),\] which we refer to as the   \emph{tilde} version of the GRID invariants. Moreover, there is an  injection \[\GHh(\gd)\hookrightarrow\GHt(\gd)\] that sends $\lgridhpm(\gd)$ to $\lgridtpm(\gd)$  \cite{NgOzsThu08:GRIDEffective}. In particular, \[\lgridhpm(\llink)=\lgridhpm(\gd)= 0 \textrm{ iff }\lgridtpm(\gd)= 0.\]  Theorem \ref{thm:main} therefore follows immediately  from our main technical result below,  which states that the \emph{tilde} versions of the GRID invariants satisfy a weak functoriality under decomposable Lagrangian cobordisms.

\begin{theorem}
\label{thm:maintilde} Suppose $\llinkb,\llinkt$ are Legendrian links in $(\R^3,\xistd)$ with grid representatives $\gdb,\gdt$, respectively. Suppose there exists a decomposable Lagrangian cobordism $L$ from $\llinkb$ to $\llinkt$. Then there is a  homomorphism
\[\Phit_L:\GHt(\gdt)\to\GHt(\gdb)\footnote{The notation $\Phit_L$ is a slight abuse of notation as we do not prove that this map  depends only on $L$.} \] such that  \[\Phit_L(\lgridtpm(\gdt))=\lgridtpm(\gdb).\] This map has Maslov--Alexander bidegree  \[(\chi(L),\frac{1}{2}(\chi(L)+|\llinkb|-|\llinkt|)),\] where $|\llink_\pm|$ is the number of components of $\llink_\pm$.\end{theorem}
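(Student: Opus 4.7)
\textbf{The plan is} to build $\Phit_L$ by composing chain maps associated to the elementary pieces of a decomposition $L = L_n \circ \cdots \circ L_1$. For each $L_i$ one chooses a grid representative $\gd_i$ for the intermediate Legendrian link so that $L_i$ corresponds to an explicit local modification of the grid, defines a chain map \[\Phit_{L_i} \colon \GCt(\gd_i) \to \GCt(\gd_{i-1}),\] and checks that the induced map on homology sends $\lgridtpm(\gd_i)$ to $\lgridtpm(\gd_{i-1})$ with the bidegree predicted by the elementary contribution to $\chi(L)$ and to $|\llinkb|-|\llinkt|$. Because $\chi$ and the component count are both additive under gluing along link boundaries, the bidegrees and the $\lgridtpm$-functoriality compose correctly to yield the full statement of the theorem.

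For \emph{isotopies}, any Legendrian isotopy between consecutive intermediate links can be realized by a finite sequence of grid moves (commutations and Legendrian de/stabilizations); the Ozsv\'ath--Szab\'o--Thurston quasi-isomorphisms associated to these moves already preserve the canonical cycles $\lgridtpm$ and have bidegree $(0,0)$. For \emph{births}, the cobordism adds a $\tb = -1$ Legendrian unknot, which at the grid level we realize by appending a $2\times 2$ block (containing a single $O$ and a single $X$) to a corner of $\gd_{i-1}$. Under the resulting K\"unneth-type splitting \[\GHt(\gd_i) \cong \GHt(\gd_{i-1}) \otimes \GHt(\gd_U),\] with $\gd_U$ denoting the appended $2\times 2$ unknot grid, the canonical cycles factor as tensor products, and evaluating the unknot factor on its nontrivial $\lgridtpm$ class gives a chain map of bidegree $(1,0)$ sending tilde GRID invariants to tilde GRID invariants.

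The main obstacle will be the \emph{pinch} case. Here the plan is to use the isotopy step to maneuver $\gd_i$ into a position where the pinch is realized by a specific local move (for example, fusing two $X$-markings that lie in the same row or column, so that the grid number drops by one). The map $\Phit_{L_i}$ is then defined by counting pentagons (or rectangles) between generators of $\GCt(\gd_i)$ and $\GCt(\gd_{i-1})$, in the spirit of the crossing-change and (de)stabilization maps already present in the grid homology literature. Checking that this is a chain map is combinatorial and routine; the crucial point is to verify that exactly one pentagon originates at $\lgridtpm(\gd_i)$, that it terminates at $\lgridtpm(\gd_{i-1})$, and that no other canonical generator receives a contribution. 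This relies on the fact that $\lgridtpm$ is supported at the northeast or southwest corners of the $O$- or $X$-markings, so the pentagon contributing to the pinch map is uniquely determined by the location of the fused markings. The bidegree $(-1, \tfrac{1}{2}(-1 \pm 1))$ then matches the two sub-cases in which the pinch splits or merges link components, completing the proof.
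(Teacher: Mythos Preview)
Your overall strategy---decompose $L$ into elementary cobordisms, build a map for each, and compose---is exactly the paper's, and your treatment of isotopies is correct. The gaps are in your handling of births and pinches.

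\emph{Births.} Your description is off in two places. First, a grid for the $\tb=-1$ unknot carries two $O$'s and two $X$'s, not one of each. Second, and more seriously, the tilde chain complex of a block-diagonal grid does \emph{not} split as a tensor product at the chain level: generators are arbitrary bijections between all vertical and all horizontal circles, not just block-respecting ones. Even granting a K\"unneth isomorphism on homology, you have not explained why the canonical cycles $\xpm$ factor under it, nor why evaluating against the unknot factor (whose $\lgridtpm$ sits in bidegree $(0,0)$) produces a map of bidegree $(1,0)$ rather than $(0,0)$. The paper avoids all of this by inserting the $2\times2$ unknot region \emph{inside} the grid, adjacent to an existing $X$-marking, and defining the birth map as a composite $e\circ\psi\circ\Pi$: project onto the summand of $\GCt(\gdt)$ spanned by generators containing a specified one of the new intersection points but not the other, count certain rectangles passing through all four new markings, and then identify with $\GCt(\gdb)$. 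The bidegree $(1,0)$ then comes out of an honest grading computation.

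\emph{Pinches.} Your local model---``fusing two $X$-markings that lie in the same row or column, so that the grid number drops by one''---does not make sense as stated, since a grid diagram never has two $X$'s in the same row or column. In the paper the grid number is \emph{unchanged}: $\gdb$ and $\gdt$ differ only in the positions of two markings in adjacent rows (both $X$'s or both $O$'s, according to the orientation of the saddle), and the map counts pentagons (in the $X$ case) or triangles (in the $O$ case) in a combined diagram where the two grids share all circles except one horizontal one. Your intuition that a single such polygon carries $\xpm(\gdt)$ to $\xpm(\gdb)$ is correct, but the grid configuration you sketch would not produce it.
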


Recall that a decomposable  cobordism $L$ as in the theorem can be described as a composition of elementary cobordisms associated with Legendrian isotopies, pinches, and births. To prove Theorem \ref{thm:maintilde}, we  define combinatorially computable maps on the \emph{tilde} version of grid homology for   each of these elementary cobordisms (the maps corresponding to Legendrian isotopies were  defined  in \cite{OzsSzaThu08:GRID}), and  show that  these  elementary maps preserve the \emph{tilde} GRID invariant. We then define $\Phi_L$ to be the appropriate composition of these elementary maps.

In \cite{Juh16:SFHFunctoriality,Zem19:HFKFunctoriality}, Juh{\'a}sz and Zemke independently proved that decorated link cobordisms between pointed links induce  well-defined maps on knot Floer homology. (They defined these maps differently, but showed in \cite{JuhZem18:HFKFunctorialityEquiv} that their  definitions agree for the \emph{tilde}  flavor of $\mathrm{HFK}$.) A grid diagram  naturally  specifies a pointed  link, and the sequence of grid  moves corresponding to a decomposition of  a Lagrangian cobordism $L$ from $\llinkb$ to $\llinkt$ into elementary pieces specifies a decorated  cobordism between  pointed copies of $\llink_\pm$. We believe that the map $\Phi_L$ agrees with the  \emph{functorial} map of Juh{\'a}sz--Zemke associated to this decorated  cobordism, but  do not prove this here.

\subsection{Effectiveness}
\label{ssec:examples}
In Section \ref{sec:examples}, we give several examples that show that Theorem \ref{thm:main} can be used to obstruct decomposable Lagrangian cobordisms where the classical invariants and smooth topology do not. In particular, we prove the following in Section \ref{ssec:infinite}.

\begin{theorem}\label{thm:examples}
  For each  $g \in\mathbb{Z}_{\geq 0}$, there are  Legendrian knots $\llinkb, 
  \llinkt \subset (\R^3,\xistd)$ such that
  \begin{itemize}
    \item there is a smooth cobordism of genus $g$ in $\R \times \R^3$ 
      between  $\llinkb$ and $\llinkt$,
    \item $\tb(\llinkt) - \tb(\llinkb) = 2 g$ and $\rot (\llinkt) = \rot 
      (\llinkb)$,  
    \item $\lgridhp(\llinkt) = 0$ and $\lgridhp(\llinkb) \neq 0$.
  \end{itemize}
  The last item implies that there is no decomposable Lagrangian cobordism from $\llinkb$ to $\llinkt$.
\end{theorem}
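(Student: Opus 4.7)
The overall strategy is to produce, for each $g \ge 0$, a pair of Legendrians $\llinkb, \llinkt$ satisfying four conditions: (i) $\lgridhp(\llinkb) \ne 0$; (ii) $\llinkt$ is a positive Legendrian stabilization of some Legendrian knot, so that $\lgridhp(\llinkt) = 0$ by Proposition \ref{prop:stab}; (iii) a smooth genus-$g$ cobordism in $\R \times \R^3$ connects the underlying smooth knot types; and (iv) the Chantraine relations $\tb(\llinkt) - \tb(\llinkb) = 2g$ and $\rot(\llinkt) = \rot(\llinkb)$ hold, so that neither the classical invariants nor smooth topology alone obstructs a Lagrangian cobordism. Items (i) and (ii) then combine with \thmref{thm:main} to rule out a decomposable Lagrangian cobordism.

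For the source $\llinkb$, I would choose a maximal Thurston--Bennequin Legendrian representative of a positive torus knot, say $T(2,2n+1)$ with rotation number $0$; the nonvanishing of $\lgridhp(\llinkb)$ in such examples is well-known and can either be cited from the grid-invariant literature or verified directly by exhibiting an explicit cycle in the associated grid chain complex. Positive torus knots also form a convenient starting point for constructing smooth cobordisms through explicit band moves and connected sums.

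For the target, I would fix a smooth knot type $K_+$ that is smoothly cobordant to the underlying smooth type $K_-$ of $\llinkb$ by a genus-$g$ surface---say $K_+ = K_- \mathbin{\#} J_g$ for a genus-$g$ slice knot $J_g$, or $K_+$ obtained from $K_-$ by $g$ saddle moves of controlled type. Starting from any Legendrian representative of $K_+$, I would apply a sequence of positive and negative stabilizations, finishing with a positive stabilization, to produce $\llinkt$. Matching $\tb$ and $\rot$ to the desired target values is elementary bookkeeping, since each positive (resp.\ negative) stabilization lowers $\tb$ by one and raises (resp.\ lowers) $\rot$ by one; the parity constraint on $(\tb,\rot)$ imposed by $K_+$ can always be accommodated by choosing a suitable initial Legendrian representative.

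The main obstacle is the verification of (i) for the chosen $\llinkb$; the remaining items reduce to combinatorial checks and straightforward smooth constructions. I would commit to an explicit infinite family---for instance, $\llinkb$ a max-$\tb$ right-handed Legendrian trefoil and $\llinkt$ built as a positive stabilization of a Legendrian representative of $K_- \mathbin{\#} T(2,2g+1)$, whose Seifert genus exceeds that of $K_-$ by exactly $g$---and verify the four conditions in turn. Once confirmed, \thmref{thm:main} immediately gives the non-existence of a decomposable Lagrangian cobordism from $\llinkb$ to $\llinkt$.
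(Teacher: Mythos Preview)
There is a genuine gap: the ``elementary bookkeeping'' in step (iv) is obstructed by the slice--Bennequin inequality, and for your proposed $\llinkb$ this obstruction is fatal.

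Write $g_4(K)$ for the smooth slice genus of $K$. If $\llinkt = S_+(\llink')$ for some Legendrian $\llink'$ of smooth type $K_+$, then $\llink'$ must have classical invariants $(\tb(\llinkb)+2g+1,\, \rot(\llinkb)-1)$. The slice--Bennequin bound $\tb(\llink') + |\rot(\llink')| \le 2g_4(K_+) - 1$, combined with $g_4(K_+) \le g_4(K_-) + g$ (a consequence of the genus-$g$ cobordism), forces
\[
\tb(\llinkb) - \rot(\llinkb) \;\le\; 2g_4(K_-) - 3
\]
whenever $\rot(\llinkb) \le 0$. For the max-$\tb$ right-handed trefoil one has $\tb(\llinkb) - \rot(\llinkb) = 1$ while $2g_4(K_-) - 3 = -1$, so no such $\llink'$ exists; inserting additional stabilizations before the terminal $S_+$ only tightens the constraint. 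The same failure occurs for every positive torus knot: these are Legendrian simple with the unique max-$\tb$ representative having $\rot = 0$, so any Legendrian with $\lgridhp \ne 0$ is a sequence of negative stabilizations of the max-$\tb$ one and therefore satisfies $\tb - \rot = 2g_4(K_-) - 1 > 2g_4(K_-) - 3$. The problem already appears at $g = 0$: you would need a Legendrian of a knot concordant to the trefoil with $\tb = 2$, violating slice--Bennequin outright.

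The paper sidesteps this by taking $\llinkb = \llink_1$ to be a Legendrian of one of the knot types $m(10_{145})$, $m(10_{161})$, $12n_{591}$, where Chongchitmate and Ng exhibit a Legendrian $\llink_1$ with $\lgridhp(\llink_1) \ne 0$ together with another Legendrian $\llink_0$ of the \emph{same} smooth type satisfying $\tb(\llink_0) = \tb(\llink_1) + 2$ and $\rot(\llink_0) = \rot(\llink_1)$. They then add $g$ positive clasps to the front of $\llink_0$, obtaining an explicit Legendrian $\llink_0'$ with $\tb(\llink_0') = \tb(\llink_1) + 2g + 2$, and set $\llinkt = S_+(S_-(\llink_0'))$. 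The two units of ``extra'' $\tb$ carried by $\llink_0$---a manifestation of the failure of Legendrian simplicity for these knot types---are exactly what your construction lacks.
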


As alluded to  above, Symplectic Field Theory \cite{EGH} also provides effective obstructions to  Lagrangian cobordisms. The most-studied    such SFT obstruction comes from  the  Chekanov--Eliashberg DGA  \cite{Chekanov,Eliashberg}, which assigns to a Legendrian knot $\llink\subset (\R^3,\xistd)$ a differential graded algebra $(\mathcal{A}_{\llink},\partial_\llink)$, which is an invariant of the Legendrian isotopy class of $\llink$, up to stable tame isomorphism. This DGA is said to be \emph{trivial} if it is stable tame isomorphic to a DGA in which the unit is a boundary. Ekholm, Honda, and K{\'a}lm{\'a}n proved  in \cite{EHK} that an exact Lagrangian cobordism from $\llinkb$ to $\llinkt$ induces a DGA morphism \[(\mathcal{A}_{\llinkt},\partial_{\llinkt})\to(\mathcal{A}_{\llinkb},\partial_{\llinkb}).\] Therefore, if the  first DGA is trivial and  the second  is nontrivial then there cannot exist such a cobordism.
It can be difficult to determine whether these DGAs are trivial, meaning that this obstruction can be hard to apply in practice. By contrast, there is a simple algorithm to decide whether the GRID invariants vanish and apply Theorem \ref{thm:main}. 

Another advantage of the GRID invariants is that the elements $\lgridhp$ and $\lgridhm$ are preserved by negative and positive Legendrian stabilization, respectively. This implies, for example, that for   any pair $\llinkb,\llinkt$ of Legendrian knots  as in Theorem \ref{thm:examples},  the GRID invariants also obstruct the existence of a decomposable Lagrangian cobordism from any negative stabilization of $\llinkb$ to any negative stabilization of $\llinkt$. By contrast, the Chekanov--Eliashberg DGA is trivial   for stabilized knots, and therefore cannot obstruct such cobordisms.

We should  point out that there are also examples for which   the  DGA obstructs decomposable Lagrangian cobordisms where the GRID invariants do not (see Section \ref{ssec:dgagrid}).

\subsection{Antecedents}
\label{ssec:antecedents}

As mentioned above, there are a few prior works that use knot Floer homology to obstruct  genus zero Lagrangian cobordisms; such cobordisms are called \emph{Lagrangian concordances}, and are automatically exact. 

In \cite{BalSiv14:KHMLeg}, for instance, Baldwin and Sivek defined  an invariant of Legendrian knots in  arbitrary closed contact 3-manifolds using  monopole knot homology, and showed that it satisfies functoriality with respect to Lagrangian concordances in    symplectizations of such manifolds. They then proved in \cite{BalSiv18:EqInv} that there is an isomorphism between monopole knot homology and knot Floer homology that identifies their Legendrian invariant with the LOSS invariant. This implies that the LOSS invariant is well-behaved with respect to Lagrangian concordances, and, in particular, reproduces Theorem \ref{thm:main} for concordances between knots in the symplectization of $(\R^3,\xistd)$, without the assumption of decomposability.

The other notable  result in this area is due to Golla and Juh{\'a}sz, who proved in \cite{GolJuh18:LOSSConc} that the LOSS invariant satisfies functoriality with respect to \emph{regular} Lagrangian concordances in  Weinstein cobordisms between closed contact 3-manifolds (which include symplectizations). More precisely, they showed 
that the functorial map (of Juh{\'a}sz--Zemke) \[\HFKh(-Y_+,\llinkt)\to\HFKh(-Y_-,\llinkb)\]associated to a decorated regular Lagrangian concordance $L$ in a Weinstein cobordism $W$, \[(W,L):(Y_-,\llinkb)\to(Y_+,\llinkt),\] sends $\lossh(\llinkt)$ to $\lossh(\llinkb)$, for decorations consisting of two parallel arcs that partition the cylinder into  disks. We note that regular cobordisms are exact and, in the symplectization of $(\R^3,\xistd)$, include decomposable cobordisms \cite{ConEtnTos19:SympFillings}; in brief,
\[\{\textrm{decomposable}\}\subseteq \{\textrm{regular}\}\subseteq \{\textrm{exact}\},\]
and it is open whether any of these inclusions are proper. The Golla--Juh{\'a}sz result therefore recovers Theorem \ref{thm:main} for concordances between knots in the symplectization of $(\R^3,\xistd)$, with the \emph{potentially} weaker assumption of regularity.

As noted previously, what most differentiates the results in this paper from those in previous works  is that ours apply to positive genus Lagrangian cobordisms as well as to concordances. The table below summarizes the different settings in which the various knot Floer obstructions to Lagrangian cobordism are known to hold.

 \begin{table}[htbp]
  \begin{tabular}{ l | c c c }
    & & & \\[-1em]
 & \cite{BalSiv14:KHMLeg, BalSiv18:EqInv} & \cite{GolJuh18:LOSSConc} 
    & Present paper\\

    & & & \\[-1em]
    \midrule[\heavyrulewidth]
    & & & \\[-1em]
    For $L$ in symplectization of $(\R^3,\xistd)$  & \cmark & 
    \cmark & \cmark \\
    & & & \\[-1em]
    \midrule
    & & & \\[-1em]
   For $L$ in any symplectization & \cmark & \cmark 
    & \xmark \\
    & & & \\[-1em]
    \midrule
    & & & \\[-1em]
    For $L$ in any Weinstein cobordism & \xmark & \cmark & \xmark \\
    & & & \\[-1em]
    \midrule
    & & & \\[-1em]
     For any decomposable  $\lcob$ & \cmark & \cmark & \cmark \\
    & & & \\[-1em]
    \midrule
    For  any regular  $\lcob$ & \cmark & \cmark & \xmark \\
    & & & \\[-1em]
    \midrule
    & & & \\[-1em]
    For any exact $\lcob$ & \cmark & \xmark & \xmark \\
    & & & \\[-1em]
    \midrule    & & & \\[-1em]
    For $g (\lcob) = 0$ & \cmark & \cmark & \cmark \\
    & & & \\[-1em]
    \midrule
    & & & \\[-1em]
    For  $g (\lcob) > 0$ & \xmark & \xmark & \cmark \\
   

    \midrule[\heavyrulewidth]
  \end{tabular}
  \vspace*{11pt}
  \caption{The settings in which  various    knot Floer obstructions to Lagrangian cobordisms have been established.}
  \label{tab:comparison}
\end{table}

\subsection{Organization}
Section \ref{sec:back} consists of   background material. In Section \ref{sec:proofs}, we prove Theorem \ref{thm:maintilde}, which, as described above, implies Theorem \ref{thm:main}. In Section \ref{sec:examples}, we show via examples that our obstructions are effective, proving Theorem \ref{thm:examples}.

\subsection{Acknowledgements} The authors thank Lenny Ng for his insights on the Chekanov--Eliashberg DGA and, in particular, for computing this DGA for the knot $m(10_{145})$. The authors also thank Caitlin Leverson, Marco Marengon, Lenny Ng, Peter Ozsv{\'a}th, Yu Pan, Josh Sabloff, Steven Sivek,  Zolt{\'a}n Szab{\'o}, and Shea Vela-Vick for helpful conversations. The third author thanks North Carolina State University for their hospitality.

\section{Background}
\label{sec:back}

In this section, we provide some background on Legendrian knots, Lagrangian cobordisms, knot Floer homology, and the GRID invariants. 

\subsection{Legendrian knots and Lagrangian cobordisms}
\label{ssec:backleg}
Let $\xistd=\ker(\alphastd)$  be the standard contact structure on $\R^3$, where \[\alphastd=dz-ydx, \] as in the introduction. Recall that a smooth link $\llink\subset (\R^3,\xistd)$ is called \emph{Legendrian} if \[T_p\llink\subset (\xistd)_p \textrm{ for all } p\in\llink.\]  We will primarily study Legendrian links up to Legendrian isotopy (and will frequently blur the distinction between Legendrian links and Legendrian link types). Furthermore, our Legendrian links will generally be oriented but we will often suppress the orientation from the notation. 

We will  typically represent a   Legendrian link by its \emph{front diagram}, which is its projection to the $xz$-plane, as illustrated in Figure \ref{fig:trefoil}. Note that a Legendrian link is completely determined by its front diagram; in particular, the crossing information is encoded in the slopes of the strands in the diagram (strands with more negative slope pass over strands with less negative slope). Front diagrams for Legendrian isotopic links are related by a sequence of Legendrian planar isotopies and Legendrian Reidemeister moves, shown in the first three diagrams of Figure \ref{fig:decomposable}. 

\begin{figure}[htbp]
 \labellist
  \tiny\hair 2pt
  \pinlabel $x$ at 56 3
    \pinlabel $z$ at 3 55
   \endlabellist
  \includegraphics[width=2.8cm]{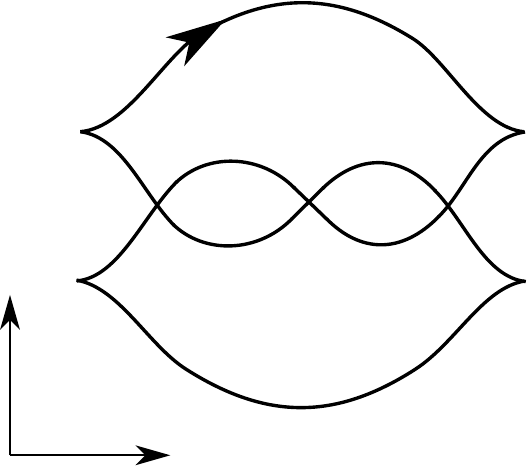}
  \caption{A front diagram for a Legendrian representative of the right-handed trefoil with $(tb,r)=(1,0)$. The positive $y$-axis points into the page.}
  \label{fig:trefoil}
\end{figure}

\begin{figure}[htbp]
 \labellist
  \large\hair 2pt
  \pinlabel $\varnothing$ at 55 24
  \tiny
   \pinlabel $1$ at 171 369
   \pinlabel $2$ at 171 285
   \pinlabel $3$ at 171 201
   \pinlabel $4$ at 171 116
   \pinlabel $5$ at 171 33
    \endlabellist
  \includegraphics[width=5.5cm]{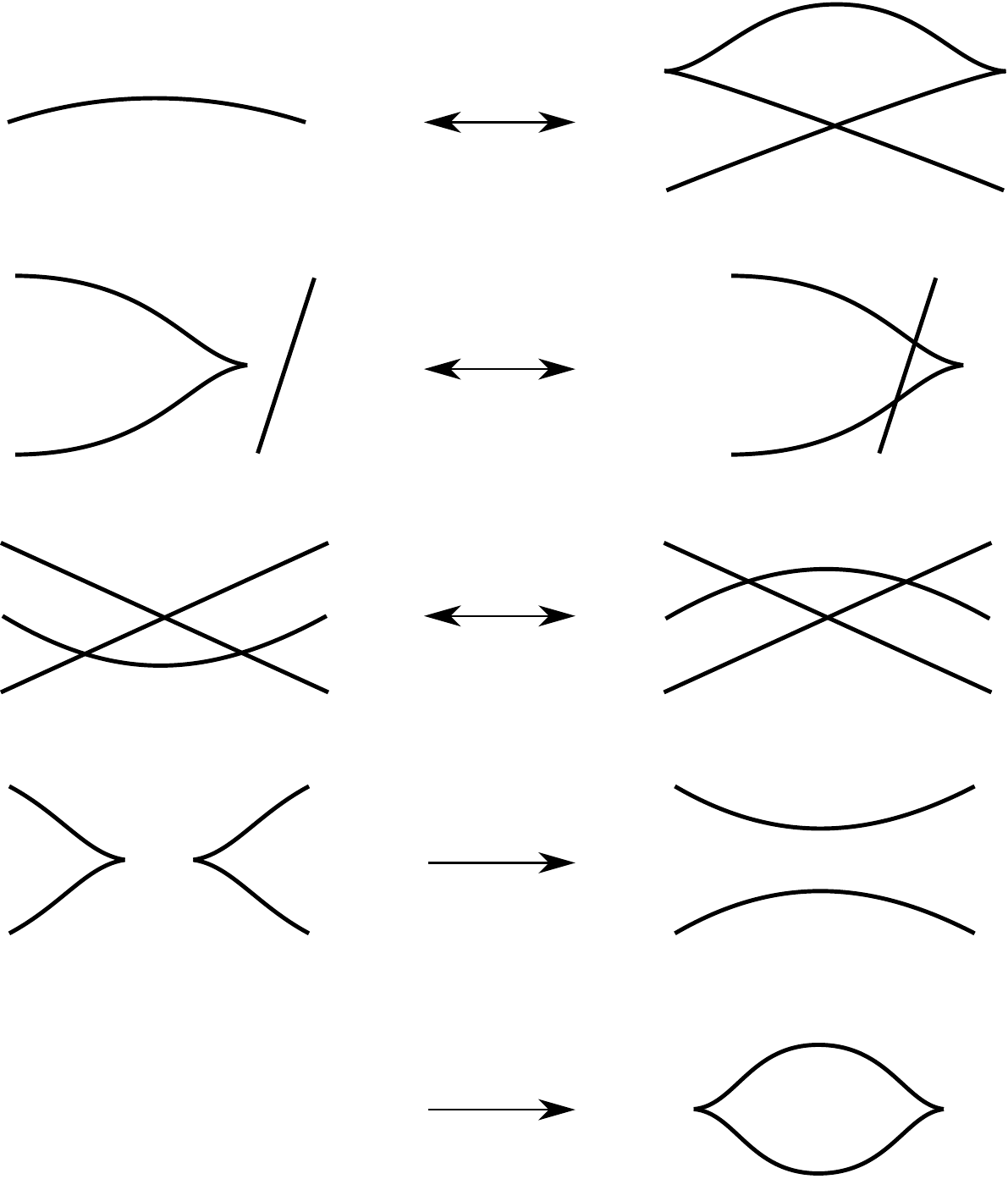}
  \caption{The moves on front diagrams  corresponding to  elementary  cobordisms; horizontal and vertical reflections of these moves are also allowed. Apart from these moves, there are also planar isotopies that preserve left and right cusps. Moves 1-3 are the Legendrian Reidemeister moves; move 4 is called a pinch; move 5 is a birth. Note that moves 4 and 5 are directed.}
  \label{fig:decomposable}
\end{figure}

There are two \emph{classical} Legendrian isotopy class invariants: the Thurston--Bennequin number $\tb$ and  rotation number $\rot$. These can be computed from a front diagram by \[
  \tb = \mathrm{wr}- \frac{1}{2}(c_+ + c_-) \textrm{ and } \rot = \frac{1}{2}(c_- - c_+),  
\]
where $\mathrm{wr}$ denotes the writhe of the diagram, and $c_+$ and $c_-$ denote the number of upward and downward pointing cusps in the oriented diagram. 
Two important operations on Legendrian isotopy classes are positive and negative \emph{Legendrian stabilization}. These operations are defined locally in terms of front diagrams  as  in Figure \ref{fig:stab}. In particular, given a Legendrian link $L$, its positive and negative stabilizations $S_+(L)$ and $S_-(L)$ are obtained by adding downward and upward pointing cusps, respectively, as shown in the figure. Note that \begin{equation}\label{eqn:tbrstab}\tb(S_\pm(L)) = \tb(L)-1 \textrm{ and } \rot(S_\pm(L)) =\rot(L)\pm 1. \end{equation}

\begin{figure}[htbp]
 \labellist
  \tiny\hair 2pt
  \pinlabel $+$ at 105 68
    \pinlabel $-$ at 105 29
       \pinlabel $L$ at -10 48
          \pinlabel ${S_{+}}(L)$ at 262 81
             \pinlabel $S_-(L)$ at 262 17
   \endlabellist
  \includegraphics[width=5.3cm]{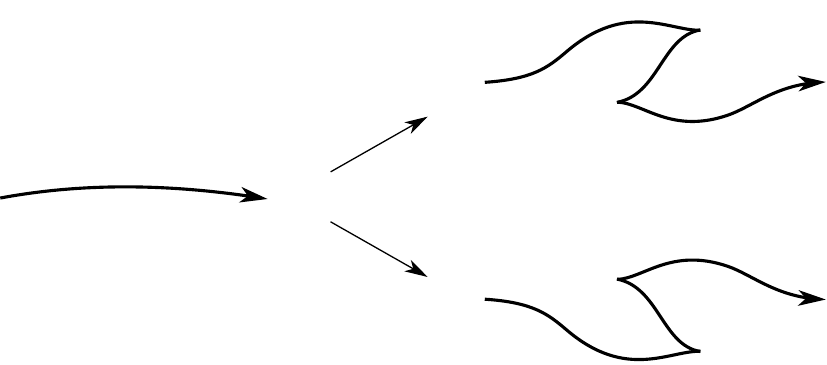}
  \caption{The positive and negative Legendrian stabilizations of a Legendrian link.}
  \label{fig:stab}
\end{figure}

Recall that the \emph{symplectization} of $(\R^3,\xistd)$ is the symplectic 4-manifold  \[(\R_t\times\R^3,d(e^t\alphastd)),\] and that an embedded surface $L$ in the symplectization is called \emph{Lagrangian} if \[d(e^t\alphastd)|_L\equiv 0.\] Suppose $\llinkb,\llinkt$ are two Legendrian links in $(\R^3,\xistd)$.  A \emph{Lagrangian cobordism from $\llinkb$ to $\llinkt$} is an embedded Lagrangian surface $L$ in the symplectization  such that 
\begin{align*}
L\cap ((-\infty,-T)\times \R^3) &= (-\infty,-T)\times\llinkb,\\
L\cap ((T, \infty)\times \R^3) &= (T, \infty)\times\llinkt
\end{align*}
for some $T>0$. This Lagrangian is said to be \emph{exact} if there exists a function $f:L\to \R$ that is constant on the  cylindrical ends and satisfies \[(e^t\alphastd)|_L=df.\] A Lagrangian cobordism of genus zero is called a \emph{Lagrangian concordance}, and is automatically exact.
As mentioned in the introduction, Chaintraine proved in \cite{Cha10:LagConc} that the existence of  a Lagrangian cobordism $L$ from $\llinkb$ to $\llinkt$ implies that \begin{equation}\label{eqn:classicalobstruction}\tb(\llinkt)-\tb(\llinkb) = -\chi(L) \textrm{ and } \rot(\llinkt) = \rot(\llinkb).\footnote{He proved this for cobordisms between Legendrian \emph{knots}, but the proof extends immediately to links.}\end{equation} In particular, Lagrangian cobordisms (even concordances  \cite{chantrainesymmetric}) are  directed. 

By work of Bourgeois, Sabloff, and Traynor \cite{BST}, Chantraine \cite{Cha10:LagConc}, Dimitroglou Rizell \cite{DR}, and Ekholm, Honda, and K{\'a}lm{\'a}n \cite{EHK}, there exists an elementary exact Lagrangian cobordism from $\llinkb$ to $\llinkt$ whenever $\llinkt$ is obtained from $\llinkb$  via Legendrian isotopy, a pinch, or a birth, as illustrated in Figure \ref{fig:decomposable}. Note that for a pinch, it is $\llinkb$ that in fact looks as if it has been obtained from pinching $\llinkt$. Topologically, these  elementary cobordisms  are    annuli, saddles, and cups, respectively. Any composition of elementary cobordisms yields an exact Lagrangian cobordism, and an exact Lagrangian cobordism is called \emph{decomposable} if it is  isotopic through exact Lagrangians to such a composition \cite{Cha12:DecompLag}. As mentioned in the introduction, it is open whether every exact Lagrangian cobordism is decomposable.

\subsection{Knot Floer homology and the  GRID invariants}
\label{ssec:hfkgrid} We begin by reviewing  the grid diagram formulation of knot Floer homology, following the conventions in \cite{OzsStiSza15:GHBook}. See also \cite{ManOzsSar09:GH, ManOzsSza07:GHComb}.

A \emph{grid diagram} $\gd$ is an $n\times n$ grid of squares together with sets \[
\gdo = \{O_1,\dots,O_n\} \textrm{ and }
\gdx = \{X_1,\dots,X_n\}\]
of  markings in the squares    such that each row and column of $\gd$ contains exactly one $O$ marking and one $X$ marking (we   omit the subscripts indexing these markings when convienient);  $n$ is called the \emph{grid number} of $\gd$. We  will think of $\gd$ as  a torus by identifying its top and bottom sides and its left and right sides in the standard way, so that the horizontal grid lines become horizontal circles and the vertical grid lines become  vertical circles, as indicated in Figure \ref{fig:example}.

A grid diagram specifies an oriented link in $\R^3$, obtained as  the  union of  vertical segments from the $X$s to the $O$s in each column with horizontal segments from the $O$s to the $X$s in each row, such that vertical segments pass over horizontal ones, as shown in Figure \ref{fig:example}. 
Conversely, every oriented link in $\R^3$  can be represented by a grid diagram in this way.

\begin{figure}[htbp]
 \labellist
  \tiny\hair 2pt
  \pinlabel $\mathrm{X}$ at 109 139
    \pinlabel $\mathrm{O}$ at 109 80
\pinlabel $\mathrm{X}$ at 49 80
\pinlabel $\mathrm{O}$ at 49 19
\pinlabel $\mathrm{X}$ at 139 19
\pinlabel $\mathrm{O}$ at 139 109
\pinlabel $\mathrm{X}$ at 79 109
\pinlabel $\mathrm{O}$ at 79 49
\pinlabel $\mathrm{X}$ at 19 49
\pinlabel $\mathrm{O}$ at 19 139
   \endlabellist
  \includegraphics[width=7.5cm]{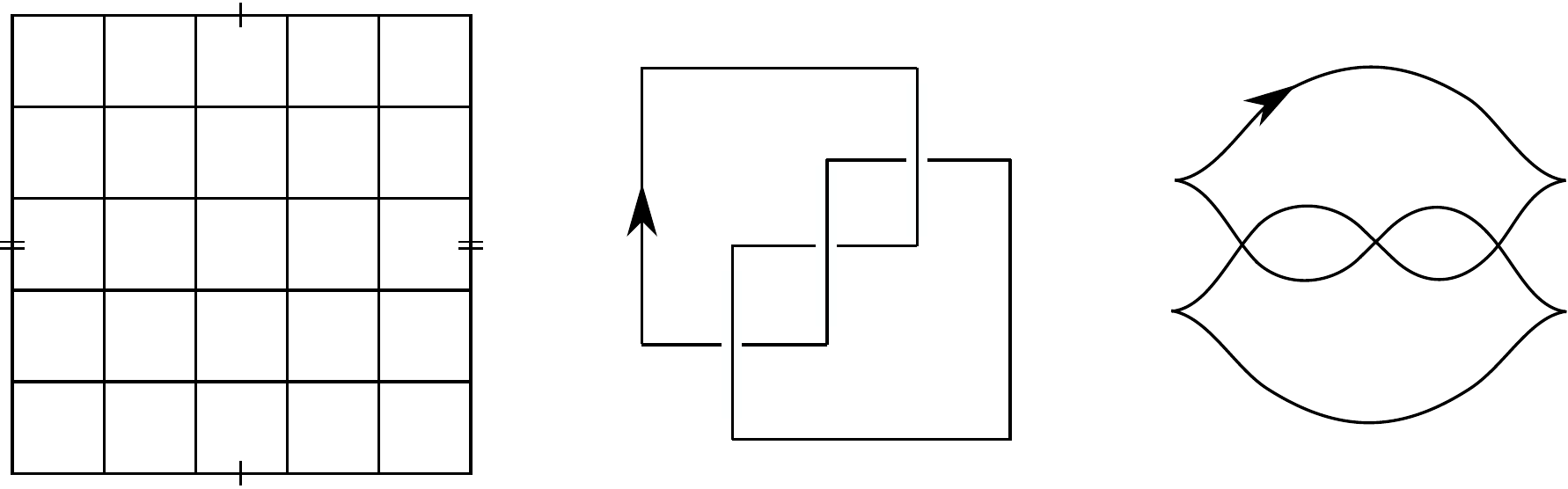}
  \caption{A  grid diagram $\gd$ for the right-handed trefoil $L$, and the corresponding front diagram for a Legendrian representative $\llink$ of $m(L)$, obtained by changing all crossings in the link diagram and rotating $45$ degrees clockwise. }
  \label{fig:example}
\end{figure}

Suppose $\gd$ is a grid diagram as above, representing an oriented link $L$. (The use of $L$ for links will only occur in this subsection, and hence should not cause confusion with Lagrangian cobordisms.)  Let \[\alpha = \{\alpha_1,\dots,\alpha_n\}\textrm{ and }
\beta = \{\beta_1,\dots,\beta_n\}
\] denote the vertical and horizontal circles of $\gd$, respectively.
The \emph{minus} flavor of the grid chain complex, \[(\GCm(\gd),\partial^-),\] is generated by one-to-one correspondences between the vertical and horizontal circles. Equivalently, a generator is a set of $n$ intersection points between these circles where each intersection point in the set belongs to exactly one $\alpha$ circle and  one $\beta$ circle. Letting $\SS(\gd)$ denote the set of   generators, $\GCm(\gd)$ is defined to be the free $\F[U_1,\dots,U_n]$-module generated by the elements of $\SS(\gd)$, where each $U_i$ is a formal variable corresponding to the marking $O_i$ and $\F$ is the 2-element field.

Given $\bx,\by\in\SS(\gd)$, let $\Rect_\gd(\bx,\by)$ be the set of   rectangles in  $\gd$ with the following properties. $\Rect_\gd(\bx,\by)$ is empty unless $\bx$ and $\by$ coincide in exactly $n-2$ intersection points. An element $r\in\Rect_\gd(\bx,\by)$ is an embedded rectangle in the toroidal grid  whose edges are arcs contained in the vertical and horizontal circles, and whose four corners are   points in $\bx\cup\by$. Moreover, we require that, with respect to the induced orientation on $\partial r$, every vertical edge of $r$ is directed from a point in $\by$ to a point in $\bx$, and vice versa for  horizontal edges; that is, \[\partial(\partial_\alpha(r))=\bx-\by\textrm{ and }\partial(\partial_\beta(r))=\by-\bx.\]
(The astute reader may have noticed that this does not seem to line up with the usual convention in Lagrangian Floer homology, but we are in fact computing Heegaard Floer homology for $(-\mathbb{T}^2, \alpha, \beta)$.)
If $\Rect_\gd(\bx,\by)$ is non-empty then it contains exactly two rectangles, as illustrated in Figure \ref{fig:rect}. Let $\Recto_\gd(\bx,\by)$ denote the subset consisting of $r\in\Rect_\gd(\bx,\by)$  with \[r\cap \gdx=\mathrm{Int}(r)\cap \bx=\emptyset.\] The differential \[\partial^-_\gd:\GCm(\gd)\to \GCm(\gd)\] is the $\F[U_1,\dots,U_n]$-module endomorphism defined on $\SS(\gd)$ by \[\partial^-_\gd(\bx) = \sum_{\by\in\SS(\gd)}\,\sum_{r\in\Recto_\gd(\bx,\by)}U_1^{O_1(r)}\cdots U_n^{O_n(r)}\cdot \by,\] where $O_i(r)$ denotes the number of times the marking $O_i$ appears in $r$.

\begin{figure}[htbp]
 \labellist
  \tiny\hair 2pt
  \pinlabel $\mathrm{X}$ at 112 141
    \pinlabel $\mathrm{O}$ at 112 81
\pinlabel $\mathrm{X}$ at 52 81
\pinlabel $\mathrm{O}$ at 52 20
\pinlabel $\mathrm{X}$ at 142 20
\pinlabel $\mathrm{O}$ at 142 110
\pinlabel $\mathrm{X}$ at 82 110
\pinlabel $\mathrm{O}$ at 82 50
\pinlabel $\mathrm{X}$ at 22 50
\pinlabel $\mathrm{O}$ at 22 141
   \endlabellist
  \includegraphics[width=2.5cm]{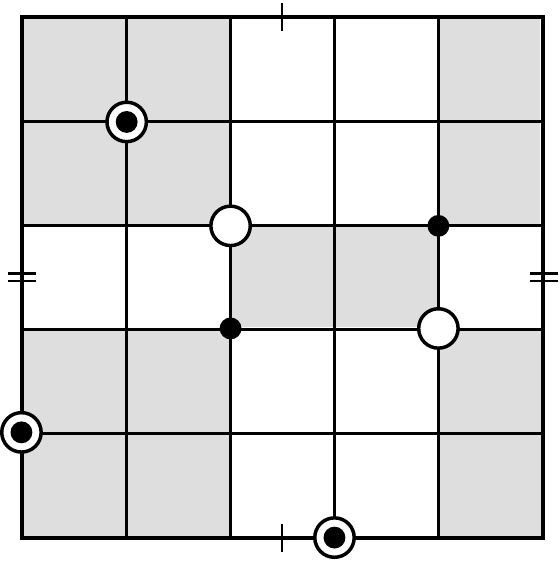}
  \caption{A grid diagram $\gd$. The generator $\bx$ comprises the black intersection points while $\by$ comprises the white  intersection points. $\Rect_\gd(\bx,\by)$ contains the two shaded rectangles shown on this torus, while $\Recto_\gd(\bx,\by)$ contains only the smaller of the two.}
  \label{fig:rect}
\end{figure}

This  complex is equipped with two gradings, the \emph{Maslov} grading and the \emph{Alexander}, defined as follows. Consider the partial ordering on points in $\R^2$ given by \[(p_1,p_2)<(q_1,q_2)\] if $p_1<q_1$ and $p_2<q_2$. Given two sets $P$ and $Q$ consisting of finitely many points in $\R^2$, let \[\mathcal{I}(P,Q) = \#\{(p,q)\in P\times Q\mid p<q\}.\] We symmetrize this quantity by defining \[\mathcal{J}(P,Q) = \frac{\mathcal{I}(P,Q) +\mathcal{I}(Q,P) }{2}.\] A generator $\bx\in\SS(\gd)$ can be viewed as a finite set of points in $\R^2$, as can the marking sets $\gdx$ and $\gdo$. It therefore makes sense to  define \begin{align}
\label{eqn:absmas}M_\gdo(\bx) &= \mathcal{J}(\bx,\bx)-2\mathcal{J}(\bx,\gdo)+\mathcal{J}(\gdo,\gdo)+1,\\
\label{eqn:absalex}M_\gdx(\bx) &= \mathcal{J}(\bx,\bx)-2\mathcal{J}(\bx,\gdx)+\mathcal{J}(\gdx,\gdx)+1.
\end{align}
The Maslov and Alexander gradings of a generator $\bx$ are then given by 
\begin{align*}M(\bx) &= M_\gdo(\bx),\\
A(\bx) &= \frac{1}{2}(M_\gdo(\bx)-M_\gdx(\bx))-\Big( \frac{n-|L|}{2}\Big),
\end{align*}
where $|L|$ is the number of components of the link $L$. It follows that for
$\bx,\by\in\SS(\gd)$ and $r\in\Rect_\gd(\bx,\by)$, the \emph{relative} Maslov and Alexander gradings of these generators are given by \begin{align}
\label{eqn:maslov}M(\bx)-M(\by) &= 1-2\#(r\cap \gdo) + 2\#(\mathrm{Int}(r)\cap\bx),\\
\label{eqn:alexander}A(\bx)-A(\by) &= \#(r\cap \gdx) -\#(r\cap\gdo).
\end{align}
These gradings are extended to gradings on the   complex $\GCm(\gd)$ by the rule that multiplication by any of the $U_i$ lowers Maslov grading by $2$ and Alexander grading by $1$. Note that the differential $\partial^-_\gd$ lowers the Maslov grading by 1 and preserves the Alexander grading.

The \emph{grid homology} of the grid diagram $\gd$ is the Maslov--Alexander bigraded $\F[U_1,\dots,U_n]$-module denoted by \[\GHm(\gd) = H_*(\GCm(\gd),\partial^-_\gd).\] The grid diagram $\gd$ is actually a multi-pointed Heegaard diagram for the link $L\subset S^3$, and the grid chain complex agrees with the \emph{minus} version of the corresponding knot Floer chain complex. Therefore, \[\GHm(\gd) \cong \HFKm(S^3,L).\] Suppose  $L$ has $\ell$ components. Label the markings so that $O_1,\dots,O_\ell$ belong to  the $\ell$ different components of $L$. Setting the corresponding $U_i$ equal to zero on the chain level results in a chain complex \[(\GCh(\gd),\widehat\partial_\gd)=(\GCm(\gd)/(U_1=\dots=U_\ell=0),\partial^-_\gd)\] whose  homology agrees with the \emph{hat} flavor of knot Floer homology, \[\GHh(\gd) = H_*(\GCh(\gd),\widehat\partial_\gd) \cong \HFKh(S^3,L).\] Setting \emph{all} of the $U_i$ to zero yields the \emph{tilde} version of the grid  complex, \[(\GCt(\gd),\widetilde\partial_\gd)=(\GCm(\gd)/(U_1=\dots=U_n=0),\partial^-_\gd),\] whose   homology  agrees with the \emph{tilde} version of knot Floer homology, and is related to the \emph{hat} flavor by \begin{equation}\label{eqn:tildehat}\GHt(\gd) = H_*(\GCt(\gd),\widetilde\partial_\gd) \cong \HFKh(S^3,L)\otimes V^{\otimes n-\ell},\end{equation} where \[V= \F_{0,0} \oplus \F_{-1,-1}\]   is the two-dimensional vector space supported in   Maslov--Alexander bigradings $(0,0)$ and $(-1,-1)$. Finally, the quotient map \[j:\GCh(\gd) \to \GCt(\gd)\] induces an injection \[j_*:\GHh(\gd) \to \GHt(\gd)\]  on homology \cite{NgOzsThu08:GRIDEffective}.

Suppose $\gd$ is a grid diagram representing $L$. By changing all crossings in the associated link diagram, rotating $45$ degrees clockwise, smoothing the top and bottom pointing corners, and turning the left and right pointing corners into cusps, we obtain a front diagram for a Legendrian representative $\llink$ of $m(L)$, as indicated in Figure \ref{fig:example}. We say that a grid diagram $\gd$ \emph{represents} a Legendrian link $\llink$ if the front diagram obtained from $\gd$ in the manner  above is isotopic to the front diagram for $\llink$. Every Legendrian link in $(\R^3,\xistd)$ can be represented by a grid diagram in this way. 

Suppose $\gd$ represents the smooth link $L$ and a Legendrian representative $\llink$ of $m(L)$ as above. As shown in \cite{OzsSzaThu08:GRID}, there are two canonical cycles \[\xp(\gd),\xm(\gd)\in\GCm(\gd)\] consisting of the intersection points to the immediate upper right and   lower left, respectively, of the markings in $\gdx$. These  two generators give rise to cycles in the \emph{hat} and \emph{tilde} complexes as well, which we denote in the same way. The Maslov and Alexander gradings of these cycles are given by \begin{align}
\label{eqn:masgrid}M(\xpm(\gd)) &=\tb(\llink)\mp\rot(\llink)+1 ,\\
\label{eqn:alexgrid}A(\xpm(\gd)) &= \frac{1}{2}(\tb(\llink)\mp\rot(\llink)+|\llink|),
\end{align} where $|\llink|$ is the number of components of $\llink$. In particular, the gradings of the two generators recover $\tb(\llink)$ and $\rot(\llink)$.
The \emph{hat} and \emph{tilde} versions of the GRID invariants of the Legendrian link $\llink$ are then defined   \cite{OzsSzaThu08:GRID} by \[\lgridhpm(\llink)=\lgridhpm(\gd):=[\xpm(\gd)]\in \GHh(\gd) \cong \HFKh(S^3,L) \cong \HFKh(-S^3, \llink)\] and \[\lgridtpm(\gd):=[\xpm(\gd)]\in \GHt(\gd).\] In particular, \[\lgridtpm(\gd) = j_*(\lgridhpm(\gd)),\] which implies that \begin{equation}\label{eqn:zeroiff}\lgridhpm(\llink)=\lgridhpm(\gd)= 0 \textrm{ iff }\lgridtpm(\gd)= 0\end{equation} since $j_*$ is injective.\footnote{Unlike for the \emph{hat} flavor of the GRID invariants, we do not denote $\lgridtpm(\gd)$ by  $\lgridtpm(\llink)$ since  these classes (and the group $\GHt(\gd)$) depend not only on the Legendrian link $\llink$ but also on the grid number $n$, as in \eqref{eqn:tildehat}.}

Ozsv{\'a}th, Szab{\'o}, and Thurston proved  that  $\lgridhpm(\gd)$ are invariants of the Legendrian isotopy class of $\llink$. Specifically, if $\gd_0$ and $\gd_1$ are grid diagrams representing Legendrian isotopic links then there is an isomorphism  \cite[Theorem 1.1]{OzsSzaThu08:GRID} \[\GHh(\gd_0)\to\GHh(\gd_1)\] of Maslov--Alexander bidegree $(0,0)$ that sends $\lgridhpm(\gd_0)$ to $\lgridhpm(\gd_1)$. This  map is defined combinatorially, in terms of chain maps on the grid  complex associated to grid diagram versions of the Legendrian Reidemeister moves. Their argument also gives rise to the following statement for the \emph{tilde} flavor
 of the GRID invariants.

\begin{proposition}
\label{prop:isotopy}
If $\gd_0$ and $\gd_1$ are grid diagrams representing  Legendrian isotopic links then there is a homomorphism \[\GHt(\gd_0)\to \GHt(\gd_1)\]  that sends $\lgridtpm(\gd_0)$ to $\lgridtpm(\gd_1)$. This map has Maslov--Alexander bidegree $(0,0)$.
\end{proposition}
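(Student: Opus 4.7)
The plan is to mimic the proof of \cite[Theorem 1.1]{OzsSzaThu08:GRID}, which establishes the analogous statement for the \emph{hat} flavor, and to check at each step that the chain maps descend to the \emph{tilde} quotient. First, I would invoke the Legendrian grid theorem, which says that two grid diagrams represent Legendrian isotopic links if and only if they are related by a finite sequence of commutations and Legendrian-preserving (X:NW and X:SE) (de)stabilizations. Since the statement of the proposition is transitive in $\gd_0, \gd_1$, it then suffices to define a chain map on $\GCt$ associated to each such elementary move, verify that it has Maslov--Alexander bidegree $(0,0)$, and show that it sends $\xpm(\gd_0)$ to a cycle representing $\lgridtpm(\gd_1)$.

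For commutations, I would copy the pentagon-counting chain maps of \cite{OzsSzaThu08:GRID}: the relevant count is of empty pentagons avoiding all $X$ markings and, for the tilde complex, also all $O$ markings. These pentagons carry no $U_i$ monomial, so the same formula defines a map $\GCt(\gd_0) \to \GCt(\gd_1)$. Bigrading preservation and the identification of the images of $\xpm$ are then combinatorial checks essentially identical to the ones carried out for the hat flavor, relying only on the local picture of the pentagons near the commutation region and the $X$-marking positions.

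For Legendrian-preserving (de)stabilizations, I would do the same: the OST chain maps are defined as sums over certain domains (mixed pentagon/rectangle configurations in the destabilization direction), weighted by monomials in the $U_i$'s. To descend to the tilde complex it is enough to discard all terms containing a positive power of any $U_i$; the result is a well-defined chain map on $\GCt$, and the fact that it preserves $\xpm$ and has bidegree $(0,0)$ again follows directly from the hat-level verification in \cite{OzsSzaThu08:GRID}, since the portion of that verification that concerns $\xpm$ involves only domains avoiding every $O$ marking.

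The step most in need of care is the descent itself: one has to check that after killing all $U_i$'s the chain map property and the invariance of $\xpm$ modulo boundaries still hold. Concretely, I would examine each of the OST verifications for a single elementary move, and observe that in the crucial case of $\xpm$ the contributions from domains meeting any $O$ marking drop out; what remains are exactly the tilde-level expressions, so the desired identities survive the quotient. Once this is in place, the map of the proposition is obtained as the composition of the elementary chain maps corresponding to the sequence of grid moves relating $\gd_0$ and $\gd_1$; compatibility of all bidegrees with $(0,0)$ and the chain of identifications $\xpm(\gd_0) \mapsto \cdots \mapsto \xpm(\gd_1)$ is then immediate.
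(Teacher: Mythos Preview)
Your proposal is correct and is exactly the argument the paper has in mind: the paper does not give a detailed proof of this proposition at all, but simply states that the Ozsv{\'a}th--Szab{\'o}--Thurston argument for the \emph{hat} invariants ``also gives rise to'' the tilde statement. Your sketch---reducing to elementary grid moves via the Legendrian grid theorem, descending the OST pentagon and (de)stabilization chain maps to $\GCt$ by setting all $U_i=0$, and checking that the identifications of $\xpm$ survive the quotient---is precisely how one would unpack that assertion, and your emphasis on verifying the descent step is well placed.
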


Note that the homomorphism above may not be an isomorphism. The GRID invariants also behave as follows under stabilization  \cite[Theorem 1.3]{OzsSzaThu08:GRID}.

\begin{proposition} 
\label{prop:stab}Suppose $\gd$ is a grid representative of a Legendrian link $\llink$, and that $\gd_\pm$ are grid representatives of the positive and negative Legendrian stabilizations $S_\pm(\llink)$, respectively. Then \[\lgridhp(\gd_+) = \lgridhm(\gd_-)=0,\] and \[\lgridhp(\gd_-) = 0 \textrm{ iff } \lgridhp(\gd)=0 \textrm{ and }
\lgridhm(\gd_+) = 0\textrm{ iff }\lgridhm(\gd) = 0 .\] The analogous statement holds for the \emph{tilde}  invariants, by \eqref{eqn:zeroiff}.
\end{proposition}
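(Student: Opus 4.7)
The plan is to realize positive and negative Legendrian stabilization at the grid-diagram level and then analyze the canonical cycles combinatorially. First, I would verify that each $S_\pm$ corresponds to a specific \emph{grid stabilization} of $\gd$: a combinatorial move that replaces an $X$-marking by a $2\times 2$ block of markings in a prescribed pattern (there are several such patterns; the correct ones are singled out by demanding that the Legendrian link recovered from the enlarged grid diagram via the recipe in Section~\ref{ssec:hfkgrid} acquires a new downward cusp for $S_+$ and a new upward cusp for $S_-$). Having identified such representatives $\gd_+$ and $\gd_-$ of grid number $|\gd|+1$, I would locate the canonical cycles $\xpm(\gd_\pm)$ relative to the inserted $2\times 2$ block.

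For the vanishing statement $\lgridhp(\gd_+) = 0$, I would exhibit $\xp(\gd_+)$ as an explicit boundary in $\GCh(\gd_+)$. Inspecting the position of $\xp(\gd_+)$ relative to the new block, one can find a nearby generator $\by$ that differs from $\xp(\gd_+)$ in exactly two intersection points on the boundary of the block, together with an empty rectangle $r \in \Recto_{\gd_+}(\by, \xp(\gd_+))$ supported in the interior of the block whose $O$-markings all correspond to $U$-variables that are set to zero in $\GCh(\gd_+)$. The rectangle $r$ then contributes the term $\xp(\gd_+)$ to $\widehat\partial \by$, so $\xp(\gd_+)$ is null-homologous. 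The proof that $\xm(\gd_-)$ is a boundary is entirely symmetric.

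For the preservation statement --- that $\lgridhp(\gd_-) = 0$ iff $\lgridhp(\gd) = 0$, and similarly for $\lgridhm(\gd_+)$ vs.\ $\lgridhm(\gd)$ --- I would construct a destabilization chain map relating $\GCh(\gd_-)$ to $\GCh(\gd)$. The approach is to filter $\GCh(\gd_-)$ according to which corners of the new $2 \times 2$ block a generator occupies; the associated graded then contains a copy of $\GCh(\gd)$ as a direct summand, and one checks by direct inspection that $\xp(\gd_-)$ projects to $\xp(\gd)$ in this summand. Tracking $\xp(\gd_-)$ through the spectral sequence of the filtration yields the desired equivalence of non-vanishing. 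The tilde version then follows either by a verbatim translation of the above arguments to $\GCt$, or, more cheaply, by invoking the injectivity of $j_*\colon \GHh(\gd_\pm)\to\GHt(\gd_\pm)$ together with equation~\eqref{eqn:zeroiff}. The main technical obstacle I anticipate is verifying that $\xpm(\gd_\pm)$ sit in the correct graded piece of the destabilization filtration so as to project non-trivially onto the $\GCh(\gd)$ summand; once this is in hand, the rest of the argument is combinatorial bookkeeping.
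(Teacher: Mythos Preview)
The paper does not supply its own proof of this proposition; it simply records the statement and cites \cite[Theorem~1.3]{OzsSzaThu08:GRID}.  Your outline is broadly in the spirit of the argument given there---realise $S_\pm$ by specific grid stabilizations and then analyse how $\xpm$ interacts with the (de)stabilization maps---so the overall strategy is sound.

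However, the vanishing step as written does not go through.  First, there is a confusion about the role of the $O$-markings: if your small rectangle $r$ contains an $O_i$ whose variable has been set to zero in $\GCh(\gd_+)$, then the contribution of $r$ to $\widehat\partial\by$ is $U_i\cdot\xp(\gd_+)=0$, not $\xp(\gd_+)$.  For $r$ to contribute $\xp(\gd_+)$ with coefficient~$1$ it must avoid \emph{all} $O$-markings, so the clause about $U$-variables is either vacuous or wrong.  Second, and more seriously, even once you have an empty rectangle from $\by$ to $\xp(\gd_+)$, this only exhibits $\xp(\gd_+)$ as a \emph{summand} of $\widehat\partial\by$; to conclude $[\xp(\gd_+)]=0$ you need $\widehat\partial\by=\xp(\gd_+)$ exactly.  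The generator $\by$ you describe will in general admit other empty rectangles (any rectangle out of $\by$ whose north-east corner lies at one of the two swapped points is a candidate), and ruling these out is not local to the $2\times2$ block---it depends on the rest of the grid.  In \cite{OzsSzaThu08:GRID} this is handled by tracking $\xpm$ through the explicit (de)stabilization chain homotopy equivalences on $\GCm$: for the grid-stabilization types realising $S_+$ one finds that $\xp$ is carried to a $U$-multiple, and hence vanishes upon passing to $\GHh$.  Your filtration approach to the preservation statement is much closer to what is actually done, since those (de)stabilization maps are built from exactly such a filtration; the substantive step there is the explicit computation of where $\xpm$ lands, rather than a spectral-sequence argument.
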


\section{Proofs of main results}
\label{sec:proofs}


To define the map $\Phit_L$ in Theorem \ref{thm:maintilde} associated to a decomposable Lagrangian cobordism $L$, we first define maps  associated to Legendrian isotopies, pinches, and births, as discussed in the introduction. The maps associated to Legendrian isotopies were   defined previously by Ozsv{\'a}th, Szab{\'o}, and Thurston  in  \cite{OzsSzaThu08:GRID}, and are described   in Proposition \ref{prop:isotopy}, so we will restrict our attention below to the maps associated to pinches and births.

\subsection{Pinches}

\begin{proposition}
\label{prop:pinch} Suppose $\llinkt$ is obtained from $\llinkb$ via a pinch move. For any  grid diagrams   $\gd_+$ and $\gd_-$ representing $\llinkt$ and $\llinkb$, respectively, there is a homomorphism \[\Phit:\GHt(\gd_+)\to\GHt(\gd_-)\] 
that sends $\lgridtpm(\gd_+)$ to $\lgridtpm(\gd_-)$. This map has Maslov--Alexander bidegree \[\begin{cases}
(-1,0),&\textrm{if } |\llinkb|=|\llinkt|+1,\\
(-1,-1),&\textrm{if } |\llinkb|=|\llinkt|-1,
\end{cases}\] where $|\llink_\pm|$ is the number of components of $\llink_\pm$.
\end{proposition}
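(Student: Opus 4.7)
My plan is to realize the pinch as a purely local modification of grid diagrams and to define $\Phit$ combinatorially inside the local window, following the template of the commutation and Reidemeister maps in \cite{OzsSzaThu08:GRID} that underlie Proposition \ref{prop:isotopy}. By that proposition it suffices to produce such a $\Phit$ for a \emph{single} convenient pair of grid representatives of $\llinkb$ and $\llinkt$, since Legendrian isotopy already provides bidegree-$(0,0)$ maps on $\GHt$ that preserve the $\lgridtpm$ classes; I can then pre- and post-compose with those to handle arbitrary grid representatives. I would therefore select grid diagrams $\gdb$ and $\gdt$ of the same grid number $n$ that share the same $\alpha$- and $\beta$-circles and that agree as multi-pointed toroidal diagrams outside a small window, with the pinch realized inside the window by a standard local rearrangement of two markings. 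Using the rotation-and-smoothing convention of Section \ref{ssec:hfkgrid}, one checks directly that this local move corresponds on the front projection to the pinch of Figure \ref{fig:decomposable}, with the change $|\llinkb| - |\llinkt| = \pm 1$ tracking whether the pinch merges or splits components.

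With this setup, $\SS(\gdt) = \SS(\gdb)$ as sets, and I would define the chain-level map $\Phit \colon \GCt(\gdt) \to \GCt(\gdb)$ by counting empty rectangles $r \in \Recto(\bx,\by)$ whose support lies inside the window, computed with respect to the marking data of $\gdb$ and with all $O$-multiplicities zero (the tilde condition). The principal obstacle will be verifying that $\Phit$ is a chain map and that it sends $\xpm(\gdt)$ to $\xpm(\gdb)$. For the chain map identity $\widetilde\partial_\gdb \circ \Phit + \Phit \circ \widetilde\partial_\gdt = 0$, I would run the familiar grid-homology juxtaposition argument: outside the window the two differentials and $\Phit$ act compatibly, so the only surviving contributions come from domain decompositions $r_1 \cdot r_2$ that touch the window, and these are few enough to enumerate and pair off by hand. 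Preservation of the canonical cycles reduces to the local statement that inside the window the unique empty rectangle from $\xp(\gdt)$ to any generator of $\SS(\gdb)$ is precisely the region carved out by the local rearrangement, with codomain $\xp(\gdb)$; this works because both canonical generators occupy the intersection points immediately northeast of their $X$-markings. The case of $\xm$ is symmetric.

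The bidegree is then forced by the grading formulas. A pinch is a saddle, so $\chi(L)=-1$, and \eqref{eqn:tbr} gives $\tb(\llinkb) - \tb(\llinkt) = -1$ with $\rot(\llinkb) = \rot(\llinkt)$. Substituting into \eqref{eqn:masgrid} and \eqref{eqn:alexgrid} yields
\[
M(\xpm(\gdb)) - M(\xpm(\gdt)) = -1, \qquad A(\xpm(\gdb)) - A(\xpm(\gdt)) = \tfrac{1}{2}\bigl(-1 + |\llinkb| - |\llinkt|\bigr),
\]
which equals $0$ when $|\llinkb| = |\llinkt| + 1$ and $-1$ when $|\llinkb| = |\llinkt| - 1$, matching the claimed bidegrees $(-1,0)$ and $(-1,-1)$. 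Note that $\xp$ and $\xm$ produce the same bidegree shift, so a single $\Phit$ of the asserted bidegree can carry both canonical cycles simultaneously.
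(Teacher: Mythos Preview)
Your overall architecture is the paper's: reduce via Proposition~\ref{prop:isotopy} to a single convenient pair of grids, build a local chain map, check it carries $\xpm$ to $\xpm$, and read off the bidegree. The gap is in the middle step, which is the entire technical content of the proposition.

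Your proposed $\Phit$, ``count empty rectangles $r\in\Recto(\bx,\by)$ whose support lies inside the window, computed with respect to the marking data of $\gdb$,'' is not a chain map from $\GCt(\gdt)$ to $\GCt(\gdb)$. With the curves held fixed and only markings moved, $\SS(\gdt)=\SS(\gdb)$ but $\widetilde\partial_{\gdt}\neq\widetilde\partial_{\gdb}$: they differ exactly by rectangles that pass through the squares where the two markings changed position. Your window-restricted rectangle count does nothing to mediate that discrepancy. Concretely, in $\Phit\circ\widetilde\partial_{\gdt}$ you will see a rectangle $r_1$ that is empty for $\gdt$ but contains a moved $X$ of $\gdb$, followed by a window rectangle $r_2$; there is no matching term in $\widetilde\partial_{\gdb}\circ\Phit$, and nothing in your definition produces a compensating domain. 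The sentence ``outside the window the two differentials and $\Phit$ act compatibly'' is precisely what fails, because the differentials disagree on rectangles that merely \emph{touch} the window.

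The paper avoids this by \emph{not} keeping the curves fixed. It rewrites the local move so that the markings are held fixed and one horizontal circle $\beta$ is replaced by a nearby curve $\gamma$; now $\SS(\gdt)$ and $\SS(\gdb)$ are genuinely different, and the map is defined by counting embedded \emph{pentagons} (when the two local markings are $X$'s) or \emph{triangles} (when they are $O$'s) with one corner at a specified point of $\beta\cap\gamma$. This is exactly the mechanism behind commutation invariance in \cite{ManOzsSza07:GHComb}, so the chain-map verification is a standard juxtaposition argument, and a single small pentagon or triangle witnesses $\phi(\xpm(\gdt))=\xpm(\gdb)$. The split into two cases (pentagons vs.\ triangles) according to whether the pinch swaps $X$'s or $O$'s is essential and is absent from your sketch.

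Finally, your bidegree paragraph only computes the shift on the elements $\xpm$; it does not show that $\Phit$ is \emph{homogeneous}. The paper proves homogeneity separately by a direct computation from the absolute grading formulas \eqref{eqn:absmas}--\eqref{eqn:absalex} (and explicitly remarks that inferring the bidegree from its value on $\xpm$ presupposes homogeneity). You would need to supply that argument as well.
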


\begin{proof}
By Proposition \ref{prop:isotopy}, it suffices to show that there exist \emph{some} grid diagrams $\gd_+$ and $\gd_-$ representing  links  Legendrian isotopic to $\llinkt$ and $\llinkb$, respectively,  for which the conclusions of Proposition \ref{prop:pinch} hold. For this, note that there are grid diagrams $\gd_\pm$ representing $\llink_\pm$ that are identical except for the positions of two markings in adjacent rows, as  shown in Figure \ref{fig:saddle4}. 
Since $L$ is an \emph{oriented} cobordism, these two special markings must either both be $X$s, which we refer to as Case I, or   both be $O$s, which we refer to as Case II. 

\begin{figure}[htbp]
\labellist
  \tiny

  \pinlabel $\mathrm{O}$ at 7.5 91
\pinlabel $\mathrm{X}$ at 38 91
  \pinlabel $\mathrm{X}$ at 68 76
\pinlabel $\mathrm{O}$ at 98 76

  \pinlabel $\mathrm{O}$ at 127.5 91
\pinlabel $\mathrm{X}$ at 188 91
  \pinlabel $\mathrm{X}$ at 158 76
\pinlabel $\mathrm{O}$ at 218 76

  \pinlabel $\mathrm{X}$ at 7.5 30
\pinlabel $\mathrm{O}$ at 38 30
  \pinlabel $\mathrm{O}$ at 68 15
\pinlabel $\mathrm{X}$ at 98 15

  \pinlabel $\mathrm{X}$ at 127.5 30
\pinlabel $\mathrm{O}$ at 188 30
  \pinlabel $\mathrm{O}$ at 158 15
\pinlabel $\mathrm{X}$ at 218 15

\pinlabel $\gd_-$ at 56 -10
\pinlabel $\gd_+$ at 175 -10
     \endlabellist
  \includegraphics[width=9.2cm]{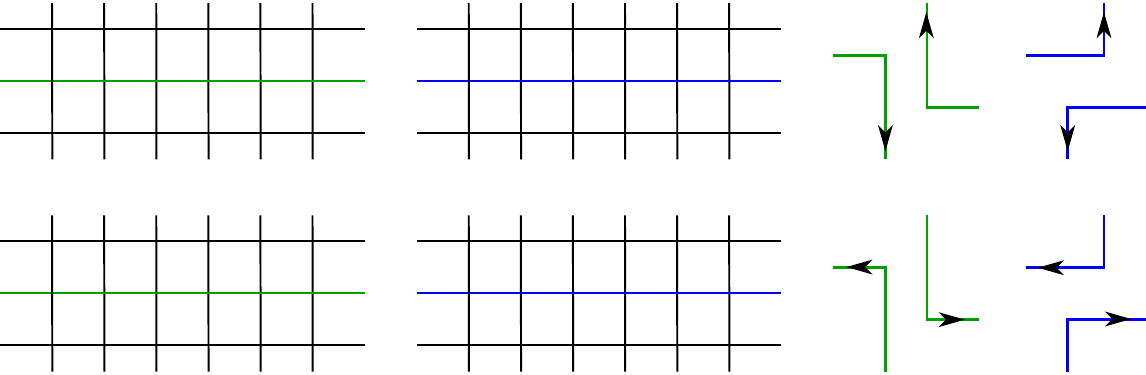}
  \caption{The grid diagrams for $\gd_\pm$ corresponding to a pinch move; Case I  on the top, Case II on the bottom.}
  \label{fig:saddle4}
\end{figure}

We may combine the grid diagrams $\gdb$ and $\gdt$ into a single toroidal diagram, as shown in Figure \ref{fig:saddle3}, which we will refer to as the \emph{combined diagram}. From this perspective, the markings in $\gdx, \gdo$ are fixed and $\gdb$ and $\gdt$ differ in a single horizontal circle. We  denote these differing horizontal circles by $\beta$ and $\gamma$, as shown in Figure \ref{fig:saddle3}. Let $a$ and $b$ be the intersection points of $\beta$ with $\gamma$ shown in  the figure. Below, we define the map $\Phi$ for each of Cases I and II.

\begin{figure}[htbp]
 \labellist
  \tiny
  \pinlabel $\beta$ at -4 37
  \pinlabel $\gamma$ at -4 22
  \pinlabel $\mathrm{O}$ at 7.5 45
\pinlabel $\mathrm{X}$ at 38 30
  \pinlabel $\mathrm{X}$ at 67.5 30
\pinlabel $a$ at 52.5 24
\pinlabel $\mathrm{O}$ at 98 15

  \pinlabel $\mathrm{X}$ at 138.5 45
\pinlabel $\mathrm{O}$ at 169 30
  \pinlabel $\mathrm{O}$ at 198.5 30
  \pinlabel $b$ at 214 24
\pinlabel $\mathrm{X}$ at 229 15
    \endlabellist
  \includegraphics[width=7cm]{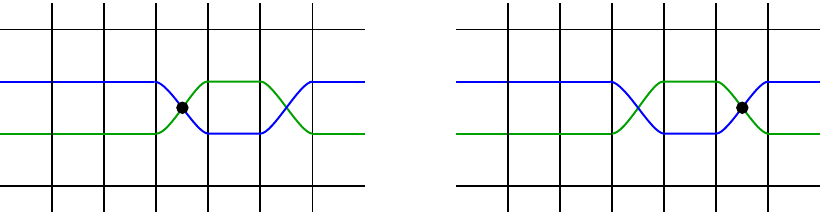}
  \caption{The grid diagrams $\gd_\pm$ combined; Case I on the left, Case II on the right.}
  \label{fig:saddle3}
\end{figure}

\subsubsection{Case I}
For $\bx\in\SS(\gdt)$ and $\by\in\SS(\gdb)$, let $\Pent(\bx,\by)$ be the space of  pentagons in the combined diagram  with the following properties. $\Pent(\bx,\by)$ is empty unless $\bx$ and $\by$ coincide in exactly $n-2$ intersection points, where $n$ is the grid number of $\gd_\pm$. An element $p\in\Pent(\bx,\by)$ is an embedded pentagon in the toroidal diagram whose edges are arcs contained in the vertical and horizontal circles, and whose five corners are points in $\bx\cup\by\cup\{a\}$. We require that, with respect to the induced orientation on $\partial p$, the boundary of this pentagon may be traversed as follows: start at the point in $\bx$ on $\beta$ and proceed along an arc of $\beta$ until arriving at  $a$; next, proceed along an arc of $\gamma$ until arriving at a point in $\by$; next, follow an arc of a vertical circle until arriving at a point in $\bx$; next, proceed along an arc of a horizontal circle until arriving at a point in $\by$; finally, follow an arc of a vertical circle back to the initial point in $\bx$. See Figure \ref{fig:saddle2} for  such pentagons. Let $\Pento(\bx,\by)$ be the subset consisting of  $p\in \Pent(\bx,\by)$  with
\[p\cap \gdo = p\cap \gdx = \mathrm{Int}(p)\cap \bx = \emptyset.\]  Let \[\phi:\GCt(\gdt)\to\GCt(\gdb)\] be the linear map defined on generators by counting such pentagons, \[\phi(\bx) = \sum_{\by\in\SS(\gdb)}\,\sum_{p\in\Pento(\bx,\by)} \by.\]

\begin{lemma}
\label{lem:chainpinch1}
$\phi$ is a chain map.
\end{lemma} 

\begin{proof}
To show that $\phi$ is a chain map, we must prove the equality of coefficients, \[\langle(\widetilde\partial_{\gdb}\circ\phi)(\bx),\by\rangle = \langle (\phi\circ\widetilde\partial_{\gdt})(\bx),\by\rangle,\] for every pair of generators $\bx\in\SS(\gdt)$ and $\by\in\SS(\gdb)$. The coefficients on the left and right    count concatenations of rectangles and pentagons from $\bx$ to $\by$ of the forms $p*r$ and $r*p$, respectively, where $p$ is a pentagon of the sort used to define $\phi$, and $r$ is a rectangle of the sort used to define the differentials. Every domain in the combined diagram that decomposes as  the juxtaposition of  a rectangle and pentagon in this way admits exactly one other such decomposition, exactly  as in the proof of commutation invariance for grid homology \cite[Lemma~3.1]{ManOzsSza07:GHComb}. In particular, the concatenations of pentagons and rectangles contributing to the coefficients above cancel in pairs, proving the lemma.\end{proof}

\begin{lemma}
\label{lem:mapstopinch1}
$\phi$ sends $\xpm(\gdt)$ to $\xpm(\gdb)$.
\end{lemma} 

\begin{proof} 
There is a unique pentagon contributing to each of $\phi(\xp(\gdt))$ and $\phi(\xm(\gdt))$, shown in Figure \ref{fig:saddle2}, that certifies that \[\phi(\xpm(\gdt))=\xpm(\gdb).\]

\begin{figure}[htbp]
 \labellist
  \tiny
  \pinlabel $\mathrm{O}$ at 7.5 45
\pinlabel $\mathrm{X}$ at 38 30
  \pinlabel $\mathrm{X}$ at 67.5 30
\pinlabel $a$ at 52.5 24
\pinlabel $\mathrm{O}$ at 98 15

  \pinlabel $\mathrm{O}$ at 138.5 45
\pinlabel $\mathrm{X}$ at 169 30
  \pinlabel $\mathrm{X}$ at 198.5 30
\pinlabel $\mathrm{O}$ at 229.5 15
    \endlabellist
  \includegraphics[width=7cm]{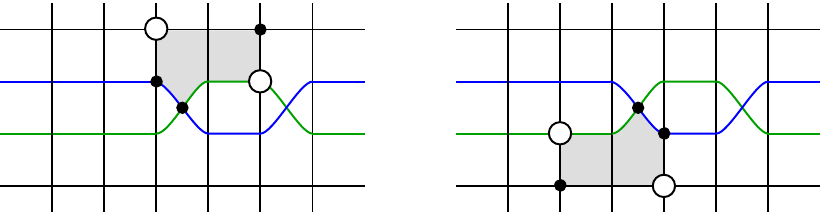}
  \caption{Left, the pentagon certifying that  the map $\phi$ sends $\xp(\gdt)$ in black to $\xp(\gdb)$ in white. Right, the pentagon from $\xm(\gdt)$  to $\xm(\gdb)$.}
  \label{fig:saddle2}
\end{figure}
\end{proof}

\begin{lemma}
\label{lem:homopinch1}
$\phi$ is homogeneous of  Maslov--Alexander bidegree  \[\begin{cases}
(-1,0),&\textrm{if } |\llinkb|=|\llinkt|+1,\\
(-1,-1),&\textrm{if } |\llinkb|=|\llinkt|-1.
\end{cases}\]
\end{lemma} 

\begin{proof}
This is a  straightforward calculation from the definitions of the Maslov and Alexander gradings in \eqref{eqn:absmas} and \eqref{eqn:absalex}, and the map $\phi$; see the proofs of \cite[Lemma 5.3.1]{OzsStiSza15:GHBook} and \cite[Lemma 6.6]{Won17:GHUnorientedSkein}, for example.
\end{proof}

\begin{remark}
If one could show that $\phi$ is homogeneous (say, using the relative grading formulas in \eqref{eqn:maslov} and \eqref{eqn:alexander}), the bidegree of $\phi$ would be  determined by the fact that this map sends $\xp(\gdt)$ to $\xp(\gdb)$, and the lemma would follow immediately from  \eqref{eqn:masgrid} and \eqref{eqn:alexgrid}, together with the facts that  \begin{align*}
\tb(\llinkb) &= \tb(\llinkt)-1, \\
\rot(\llinkb)&=\rot(\llinkt). 
\end{align*} 
\end{remark}

\subsubsection{Case II} For $\bx\in\SS(\gdt)$ and $\by\in\SS(\gdb)$, let $\Tri(\bx,\by)$ be the space of  triangles in the combined diagram  with the following properties. $\Tri(\bx,\by)$ is empty unless $\bx$ and $\by$ coincide in exactly $n-1$ intersection points. An element $p\in\Tri(\bx,\by)$ is an embedded triangle in the torus whose edges are arcs contained in the vertical and horizontal circles, and whose three corners are points in $\bx\cup\by\cup\{b\}$. We require that, with respect to the induced orientation on $\partial p$, the boundary of this triangle may be traversed as follows: start at the point in $\bx$ on  $\beta$ and proceed along an arc of $\beta$ until arriving at  $b$; next, proceed along an arc of $\gamma$ until arriving at a point in $\by$; finally, follow an arc of a vertical circle back to the initial point in $\bx$. See Figure \ref{fig:saddle1} for such triangles. Note that all such triangles  automatically satisfy \[p\cap \gdx = \mathrm{Int}(p)\cap \bx = \emptyset.\] Let $\Trio(\bx,\by)$ be the subset consisting of  $p\in \Tri(\bx,\by)$  with $p\cap\gdo = \emptyset.$  Let \[\phi:\GCt(\gdt)\to\GCt(\gdb)\] be the linear map defined on generators by counting such triangles, \[\phi(\bx) = \sum_{\by\in\SS(\gdb)}\,\sum_{p\in\Trio(\bx,\by)} \by.\]

\begin{lemma}
$\phi$ is a chain map.
\end{lemma}

\begin{proof}
This follows from an argument identical to that in the proof of Lemma \ref{lem:chainpinch1}, except that here we consider canceling concatenations of rectangles with triangles rather than pentagons. See  the proof of \cite[Lemma~3.4]{Won17:GHUnorientedSkein} for details in this case.
\end{proof}

\begin{lemma}
\label{lem:mapstopinch2}
$\phi$ sends $\xpm(\gdt)$ to $\xpm(\gdb)$.
\end{lemma} 

\begin{proof} There is a unique triangle contributing to each of $\phi(\xp(\gdt))$ and $\phi(\xm(\gdt))$, shown in Figure \ref{fig:saddle1}, that certifies that \[\phi(\xpm(\gdt))=\xpm(\gdb).\]

\begin{figure}[htbp]
 \labellist
  \tiny
  \pinlabel $\mathrm{X}$ at 15 45
\pinlabel $\mathrm{O}$ at 45 30
  \pinlabel $\mathrm{O}$ at 74 30
\pinlabel $b$ at 90 24
\pinlabel $\mathrm{X}$ at 105 15

  \pinlabel $\mathrm{X}$ at 169.5 45
\pinlabel $\mathrm{O}$ at 199.5 30
  \pinlabel $\mathrm{O}$ at 229.5 30
\pinlabel $\mathrm{X}$ at 259.5 15
    \endlabellist
  \includegraphics[width=8cm]{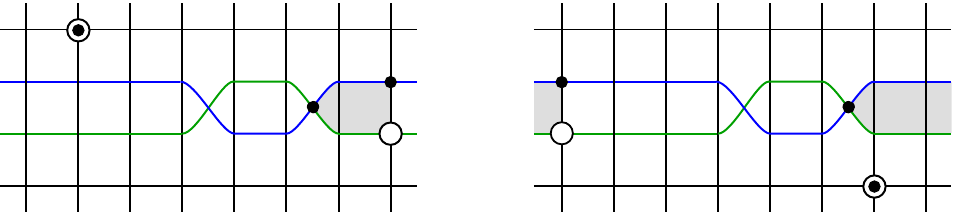}
  \caption{Left, the triangle certifying that  the map $\phi$ sends $\xp(\gdt)$ in black to $\xp(\gdb)$ in white. Right, the triangle from $\xm(\gdt)$  to $\xm(\gdb)$.}
  \label{fig:saddle1}
\end{figure}
\end{proof}

\begin{lemma}
\label{lem:homopinch2}
$\phi$ is homogeneous of  Maslov--Alexander bidegree   \[\begin{cases}
(-1,0),&\textrm{if } |\llinkb|=|\llinkt|+1,\\
(-1,-1),&\textrm{if } |\llinkb|=|\llinkt|-1.
\end{cases}\]
\end{lemma} 
\begin{proof}
As with Lemma \ref{lem:homopinch1}, this is a  straightforward calculation from the definitions of these  gradings in \eqref{eqn:absmas} and \eqref{eqn:absalex}, and the map $\phi$; see the proof of  \cite[Lemma 6.6]{Won17:GHUnorientedSkein} for details.
\end{proof}

The map $\Phit$ induced by  $\phi$ therefore  satisfies the conclusions of Proposition \ref{prop:pinch}.
\end{proof}
\subsection{Births}

\begin{proposition}
\label{prop:birth} Suppose $\llinkt$ is obtained from $\llinkb$ via a birth move. For any   grid diagrams   $\gd_+$ and $\gd_-$ representing $\llinkt$ and $\llinkb$, respectively, there is a homomorphism \[\Phit:\GHt(\gd_+)\to\GHt(\gd_-)\] 
that sends $\lgridtpm(\gd_+)$ to $\lgridtpm(\gd_-)$. This map has  Maslov--Alexander  bidegree $(1,0)$.
\end{proposition}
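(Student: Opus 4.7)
The plan is to prove Proposition \ref{prop:birth} by following the same general strategy as in Proposition \ref{prop:pinch}. By Proposition \ref{prop:isotopy}, it suffices to exhibit particular grid representatives $\gd_\pm$ of $\llink_\pm$ related by an explicit combinatorial realization of the birth move, together with a chain map between the corresponding \emph{tilde} complexes having the desired properties.

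First, I would take $\gd_+$ to be $\gd_-$ with two new rows and two new columns inserted in an isolated region of the grid, with four new markings arranged so that the $2\times 2$ intersection sub-grid is the minimal grid representation of a small disjoint unknot with $\tb=-1$, $\rot=0$. This raises the grid number by two, adds one new component, and realizes the birth at the combinatorial level. A direct computation in this $2\times 2$ unknot sub-grid shows that the canonical cycles $\xp$ and $\xm$ of the new unknotted component coincide as a single intersection-point configuration, which I will call $\bx_U$.

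Next, for a generator $\bx\in\SS(\gd_+)$, write $\bx=\bx^\sharp \sqcup \bx^\flat$, where $\bx^\flat$ records the two coordinates of $\bx$ lying on the new $\alpha$-circles and $\bx^\sharp$ records the remaining $n$ coordinates on the original $\alpha$-circles. Define
\[
\phi : \GCt(\gd_+) \to \GCt(\gd_-), \qquad \phi(\bx) = \begin{cases} \bx^\sharp, & \bx^\flat = \bx_U, \\ 0, & \text{otherwise.} \end{cases}
\]
When $\bx^\flat=\bx_U$, both points lie on the two new $\beta$-circles, so $\bx^\sharp$ is automatically a valid generator of $\SS(\gd_-)$. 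By the placement of the new $X$ markings, the new-circle portion of $\xpm(\gd_+)$ equals $\bx_U$ and the old-circle portion equals $\xpm(\gd_-)$; hence $\phi(\xpm(\gd_+))=\xpm(\gd_-)$ holds essentially tautologically.

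The chain-map verification is the key technical step. I would partition empty rectangles $r\in\Recto_{\gd_+}(\bx,\by)$ into three families: (i) those disjoint from the $2\times 2$ sub-grid and its new $\alpha$- and $\beta$-circles, which are in natural bijection with empty rectangles in $\gd_-$ from $\bx^\sharp$ to $\by^\sharp$; (ii) those supported inside the sub-grid, each of which necessarily crosses a new marking and so vanishes in the \emph{tilde} complex; and (iii) those crossing the boundary of the $2\times 2$ block, which may move $\bx^\flat$ in or out of $\bx_U$ and are the source of all the subtlety. A careful case analysis of family (iii), in the spirit of the pentagon and triangle counts in the proof of Proposition \ref{prop:pinch}, is required to show that such contributions either correspond to rectangles in $\gd_-$ or cancel in the chain-map equation. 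If the naive restriction fails to balance exactly, one augments $\phi$ by counts of specific larger polygonal domains with some corners in the sub-grid and some outside, chosen so that the total map satisfies $\phi\circ\widetilde\partial_+ = \widetilde\partial_-\circ\phi$ and so that these additional terms do not alter the image of $\xpm(\gd_+)$ (for combinatorial or grading reasons). Finally, the bidegree $(1,0)$ follows from \eqref{eqn:masgrid}--\eqref{eqn:alexgrid} applied to $\tb(\llinkt)=\tb(\llinkb)-1$, $\rot(\llinkt)=\rot(\llinkb)$, $|\llinkt|=|\llinkb|+1$, in agreement with the general formula $(\chi(L),\tfrac{1}{2}(\chi(L)+|\llinkb|-|\llinkt|))=(1,0)$ of Theorem \ref{thm:maintilde}.

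The main obstacle is the case-(iii) analysis. Unlike the pinch setting, where the two grids differ only in a single horizontal circle, here they differ in two new $\alpha$-circles and two new $\beta$-circles, and generators of $\gd_+$ can place their new-circle coordinates arbitrarily on the torus rather than just inside the $2\times 2$ block. Identifying the correct combinatorial correction (if needed) and verifying that $\phi\circ\widetilde\partial_+=\widetilde\partial_-\circ\phi$ is the principal combinatorial content of the proof.
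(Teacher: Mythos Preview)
Your central claim---that the portion of $\xpm(\gdt)$ on the new circles equals the canonical pair $\bx_U$---is incorrect, and this is not a minor technicality but the crux of the argument. When you insert two new vertical circles $\alpha_1,\alpha_2$ and two new horizontal circles $\beta_1,\beta_2$ bounding the $2\times2$ block, the point of $\xp(\gdt)$ lying on $\alpha_1$ is the upper-right corner of whichever $X$ marking sits in the column immediately to the \emph{left} of the block; this is an \emph{old} $X$, and its row is unrelated to $\beta_1$ or $\beta_2$. Hence the restriction $\bx^\flat$ of $\xp(\gdt)$ to the new $\alpha$-circles does not lie among the four points $\alpha_i\cap\beta_j$, and in particular is not your $\bx_U$. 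In the paper's setup (with the block inserted adjacent to a chosen marking $X_1$), one finds that $\xpm(\gdt)$ contain $b=\alpha_2\cap\beta_2$ but \emph{not} $a=\alpha_1\cap\beta_1$; in the notation there, $\xpm(\gdt)\in\NB$, whereas your map is supported on $\AB$. Your $\phi$ therefore sends $\xpm(\gdt)$ to zero, not to $\xpm(\gdb)$.

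The paper's construction addresses exactly this gap: rather than projecting onto $\ABt$, one projects onto $\NBt$ (where $\xpm(\gdt)$ actually live), then applies a map $\psi\colon\NBt\to\ABt$ that counts rectangles passing through all four new markings with $b$ in their interior, and only then applies the identification $e\colon\ABt\to\GCt(\gdb)$ (which is essentially your $\bx\mapsto\bx^\sharp$). The rectangle-counting map $\psi$ is the genuine content of the proof: there is a unique such rectangle out of each of $\xpm(\gdt)$, and its image under $e$ is $\xpm(\gdb)$. Your caveat about ``augmenting $\phi$ by counts of specific larger polygonal domains'' is aimed at the chain-map equation, but no correction of that sort repairs the fact that your leading term already annihilates the distinguished cycles. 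Note also that the paper places the block adjacent to an existing $X$ marking rather than in an isolated region; this adjacency is what guarantees that rectangles terminating at $a$ contain a marking, which is used to establish the subcomplex filtrations underpinning the chain-map proof.
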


\begin{proof}
By Proposition \ref{prop:isotopy}, it suffices to show that there exist \emph{some} grid diagrams $\gd_+$ and $\gd_-$ representing  links  Legendrian isotopic to $\llinkt$ and $\llinkb$, respectively,  for which the conclusions of Proposition \ref{prop:birth} hold. For this, let $\gdb$ be any grid diagram representing $\llinkb$, with marking sets $\gdx,\gdo$. Fix a marking $X_1\in\gdx$. Let $\gdt$ be the grid diagram obtained from $\gdb$ by inserting two rows and two columns to the immediate bottom right of $X_1$, with four new markings $X_2,X_3,O_2,O_3$, as shown  in Figure \ref{fig:birth0}.  Let $a$ and $b$ be the intersection points between the new vertical and horizontal circles indicated in the figure. Note that  $\gdt$ represents the disjoint union of $\llinkb$ with the $tb=-1$ Legendrian unknot, which is Legendrian isotopic to $\llinkt$.

\begin{figure}[htbp]
 \labellist
  \tiny
  \pinlabel $\alpha_3$ at 16 76
  \pinlabel $\alpha_1$ at 75 76
  \pinlabel $\alpha_2$ at 90 76
  \pinlabel $\alpha_3$ at 105 76
  
   \pinlabel $\beta_3$ at 37 57
  \pinlabel $\beta_1$ at 118 57
  \pinlabel $\beta_2$ at 118 42
  \pinlabel $\beta_3$ at 118 27
    \pinlabel $a$ at 71.5 54
      \pinlabel $b$ at 87 38.5
\pinlabel $\mathrm{X}_1$ at 7.5 63.5
\pinlabel $\mathrm{X}_1$ at 67.5 63.5
\pinlabel $\mathrm{O}_2$ at 82.5 49
\pinlabel $\mathrm{X}_2$ at 82.5 33.5
\pinlabel $\mathrm{O}_3$ at 98 33.5
\pinlabel $\mathrm{X}_3$ at 98 49
    \endlabellist
  \includegraphics[width=3.5cm]{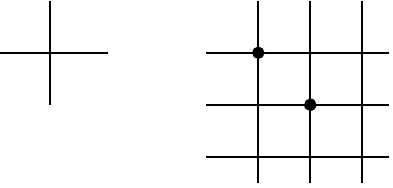}
  \caption{Left, part of a grid diagram $\gdb$ for $\llinkb$. Right, the corresponding part of the  grid diagram  $\gdt$ for $\llinkt$.}
  \label{fig:birth0}
\end{figure}

The generating set $\SS(\gdt)$ can be expressed as a disjoint union, \[\SS(\gdt) =  \AB  \cup \AN \cup \NB  \cup \NN 
,\] where
  \begin{itemize}
  \item $\AB$ consists of  $\bx \in \SS (\gdt)$ with $a, b 
    \in \bx$,
  \item $\AN $ consists of  $\bx \in \SS (\gdt)$ with $a \in 
    \bx$ but $b \notin \bx$,
  \item $\NB$ consists of  $\bx \in \SS (\gdt)$ with $a 
    \notin \bx$ but $b \in \bx$, 
  \item $\NN$ consists of  $\bx \in \SS (\gdt)$ with $a\notin\bx$ and $b 
    \notin \bx$.
\end{itemize}
This induces a decomposition of the vector space $\GCt(\gdt)$ as a direct sum,
\[\GCt(\gdt) =  \ABt \oplus \ANt  \oplus \NBt \oplus \NNt ,\]
where these  summands are the vector spaces generated by the corresponding subsets of $\SS(\gdt)$. Note that we have a sequence of subcomplexes, \[\NBt\subset \ABt\oplus\NBt\subset \GCt(\gdt).\] This follows immediately from the observation that  any rectangle either starting at $b$ or terminating at $a$ must pass through one of the new markings or $X_1$ (and therefore does not contribute to the differential). Let \[(\ABt,\widetilde\partial_{\AB})\] be the quotient complex of $\ABt\oplus\NBt$ by $\NBt$. In other words, for $\bx,\by\in\AB$, the coefficient \[\langle\widetilde\partial_{\AB}(\bx),\by\rangle=\langle\widetilde\partial_{\gdt}(\bx),\by\rangle\] counts the number of rectangles in $\Recto_{\gdt}(\bx,\by)$, as usual.

Note that there is a bijection between generators in $\AB$ and generators in $\SS(\gdb)$, given by \[\bx \mapsto \bx\smallsetminus \{a,b\}.\] This  bijection extends linearly to an isomorphism of  chain complexes, \[e:\ABt\to\GCt(\gdb),\] since for $\bx,\by\in\AB$ there is also a natural bijection \[\Rect_{\gdt}(\bx,\by)\to\Rect_{\gdb}(\bx\smallsetminus\{a,b\},\by\smallsetminus\{a,b\}),\] which identifies rectangles  avoiding the $O$ and $X$ markings in $\gdt$ with rectangles avoiding the $O$ and $X$ markings in $\gdb$. Moreover, it follows  readily  from \eqref{eqn:maslov} and \eqref{eqn:alexander}, together with this bijection of rectangles, that $e$ is homogeneous with respect to the Maslov--Alexander bigrading.

For $\bx\in\NB$ and $\by\in \AB$, let \[\Rect_{\AB}(\bx,\by)\subset\Rect_{\gdt}(\bx,\by)\] be the subset  consisting of rectangles $p$ satisfying
\begin{itemize}
  \item $p \cap (\gdo \cup  \{O_2, O_3\})= \{O_2, O_3\}$,
   \item $p \cap (\gdx \cup  \{X_2, X_3\})= \{X_2, X_3\}$,
  \item $\mathrm{Int} (p) \cap \bx = \mathrm{Int} (p) \cap \by = \{b\}$.
\end{itemize}
Let \[\psi:\NBt\to\ABt\] be the linear map defined on generators by counting such rectangles, \[\psi(\bx) = \sum_{\by\in\AB}\,\sum_{p\in\Rect_{\AB}(\bx,\by)} \by.\] Let \[\Pi:\GCt(\gdt)\to \NBt\] be projection onto the summand $\NBt$, and define \[\phi:\GCt(\gdt)\to\GCt(\gdb)\] to be the linear map given  as the composition \[\phi = e\circ \psi\circ\Pi.\]

\begin{lemma}
$\phi$ is a chain map.
\end{lemma} \begin{proof}Since $e$ is a chain map, it suffices to prove that $ \psi\circ\Pi$ is a chain map. Note  that both \[\widetilde\partial_{\AB}\circ(\psi\circ\Pi)\textrm{ and }(\psi\circ\Pi)\circ\widetilde\partial_{\gdt}\] vanish for generators $\bx\notin \AB\cup\NB$. The first  vanishes on such generators $\bx$ because  $\Pi(\bx) = 0.$ The  second vanishes on such generators $\bx$ because  \[(\Pi\circ\widetilde\partial_{\gdt})(\bx)=0.\] Indeed, for every $\by\in\NB$, the coefficient \[\langle \widetilde\partial_{\gdt}(\bx),\by\rangle = 0\] since every rectangle from a generator not containing $b$ to a generator containing $b$ must pass through a marking. To prove that $\phi$ is a chain map, it therefore suffices to prove the equality of coefficients \[\langle(\widetilde\partial_{\AB}\circ\psi\circ\Pi)(\bx),\by\rangle = \langle (\psi\circ\Pi\circ\widetilde\partial_{\gdt})(\bx),\by\rangle\] for every pair  of generators $\bx\in\AB\cup \NB$ and $\by\in\AB$. These coefficients on the left and right    count  concatenations of rectangles from $\bx$ to $\by$ of the forms $p*r$ and $r*p$, respectively, where $p$ is a rectangle of the sort used to define  $\psi$, and $r$ is a rectangle of the sort used to define the differentials. For $\bx\in\NB$, every domain in $\gd_+$ that decomposes as  the juxtaposition of  the form $p*r$ admits exactly one other decomposition into rectangles $p$ and $r$, of the form $r*p$, exactly as in the proof that the grid  differential squares to zero \cite[Proposition 2.10]{ManOzsSza07:GHComb}. There are additional domains that decompose as the juxtaposition of the form $r * p$, but these cancel in pairs as well. In particular, the concatenations of  rectangles contributing to the coefficients above cancel in pairs, proving the lemma in this case. For $\bx\in\AB$, the first coefficient is  zero since $\Pi(\bx)=0,$ and there are exactly two concatenations of the form $r*p$ contributing to the second coefficient. These two cancelling concatenations correspond to vertical and horizontal annular domains of width 2, as shown in Figure \ref{fig:birth1}.  \begin{figure}[htbp]
 \labellist
  \tiny
  \pinlabel $\mathrm{X}_1$ at 7.5 97
\pinlabel $\mathrm{O}_2$ at 22.5 82.5
\pinlabel $\mathrm{X}_2$ at 22.5 67
\pinlabel $\mathrm{O}_3$ at 38 67
\pinlabel $\mathrm{X}_3$ at 38 82.5
  \pinlabel $\mathrm{X}_1$ at 146.5 97
\pinlabel $\mathrm{O}_2$ at 161.5 82.5
\pinlabel $\mathrm{X}_2$ at 161.5 67
\pinlabel $\mathrm{O}_3$ at 177 67
\pinlabel $\mathrm{X}_3$ at 177 82.5
    \endlabellist
  \includegraphics[width=7.4cm]{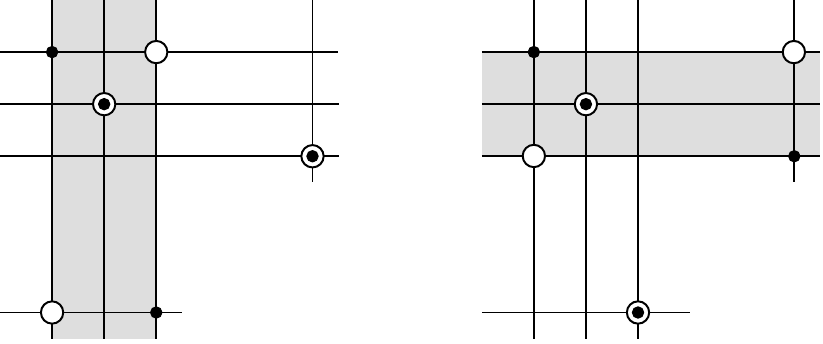}
  \caption{The vertical and horizontal annular domains corresponding to the two cancelling concatenations of the form $r*p$ for $\bx\in\AB$ shown in black and $\by\in\AB$ shown in white.}
  \label{fig:birth1}
\end{figure}
\end{proof}


\begin{lemma}
\label{lem:mapstobirth}
$\phi$ sends $\xpm(\gdt)$ to $\xpm(\gdb)$.
\end{lemma} 

\begin{proof}
Note that $\xpm(\gdt)\in\NB$, so \[\phi(\xpm(\gdt)) = e(\psi(\xpm(\gdt))).\] There is a unique rectangle contributing to each of $\psi(\xp(\gdt))$ and $\psi(\xm(\gdt))$, as shown in Figure \ref{fig:birth2}. It is then clear from the figure that \[e(\psi(\xpm(\gdt)))=\xpm(\gdb).\] \end{proof}

\begin{figure}[htbp]
 \labellist
  \tiny
  \pinlabel $\mathrm{X}_1$ at 16 56
\pinlabel $\mathrm{O}_2$ at 30.5 41.5
\pinlabel $\mathrm{X}_2$ at 30.5 26
\pinlabel $\mathrm{O}_3$ at 46 26
\pinlabel $\mathrm{X}_3$ at 46 41.5
  \pinlabel $\mathrm{X}_1$ at 184.5 56
\pinlabel $\mathrm{O}_2$ at 199.5 41.5
\pinlabel $\mathrm{X}_2$ at 199.5 26
\pinlabel $\mathrm{O}_3$ at 215 26
\pinlabel $\mathrm{X}_3$ at 215 41.5

  \pinlabel $e$ at 82 52
    \pinlabel $e$ at 251 53
  \pinlabel $\mathrm{X}_1$ at 121 56
    \pinlabel $\mathrm{X}_1$ at 289.5 56
    \endlabellist
  \includegraphics[width=9.6cm]{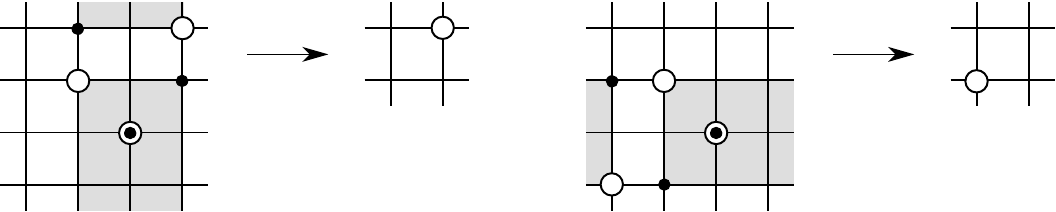}
  \caption{Left, $\xp(\gdt)$  in black and its image under $\psi$  in white. The latter is then sent to $\xp(\gdb)$ by $e$ as indicated by the arrow. Right, the corresponding pictures for $\xm(\gdt)$ and its image under $\phi$. }
  \label{fig:birth2}
\end{figure}

\begin{lemma}
\label{lem:homobirth}
$\phi$ is homogeneous of Maslov--Alexander bidegree $(1,0)$.
\end{lemma} 

\begin{proof}
It is clear from the definition of $\psi$, together with \eqref{eqn:maslov} and \eqref{eqn:alexander}, that $\psi$ is homogeneous. Since $e$ and $\Pi$ are also homogeneous,  the same is true of $\phi$. The bidegree of $\phi$ is then determined by the fact that this map sends $\xp(\gdt)$ to $\xp(\gdb)$,  and the lemma follows immediately from \eqref{eqn:masgrid} and \eqref{eqn:alexgrid}, together with the facts that  \begin{align*}
\tb(\llinkb) &= \tb(\llinkt)+1,\\
\rot(\llinkb)&=\rot(\llinkt),\\
|\llinkb|&=|\llinkt|-1.
\end{align*} 
\end{proof}

The map $\Phit$   induced by $\phi$ therefore satisfies the conclusions of Proposition \ref{prop:birth}. \end{proof}

\subsection{Putting it together}
\label{ssec:proofs}

\begin{proof}[Proof of Theorem \ref{thm:maintilde}]
Suppose $L$ is a decomposable Lagrangian cobordism from $\llinkb$ to $\llinkt$. Then there is a sequence \[\llinkb=\llink_0,\dots,\llink_m = \llinkt\] of Legendrian links such that for each $i$, $\llink_{i+1}$ is obtained from $\llink_{i}$ by either Legendrian isotopy, a pinch, or a birth. Let $\gd_i$ be a grid diagram representing $\llink_i$. Then Proposition \ref{prop:isotopy}, \ref{prop:pinch}, or \ref{prop:birth} provides a map \[\Phit_i:\GHt(\gd_{i+1})\to\GHt(\gd_{i})\] that sends $\lgridtpm(\gd_{i+1})$ to $\lgridtpm(\gd_i)$, for each $i=0,\dots,m-1$. Note that the bidegree of each  $\Phit_i$ may be expressed simply as  \[(\chi(L_i),\frac{1}{2}(\chi(L_i)+|\llink_i|-|\llink_{i+1}|)),\] where $L_i$ is the corresponding elementary cobordism from $\llink_i$ to $\llink_{i+1}$. We  define the map \[\Phit_L:=\Phit_0\circ\dots\circ\Phit_{m-1}:\GHt(\llinkt)\to\GHt(\llinkb).\] This map then sends $\lgridtpm(\gd_+)$ to $\lgridtpm(\gd_-)$ and has bidegree \[(\chi(L),\frac{1}{2}(\chi(L)+|\llinkb|-|\llinkt|)),\] as desired.
\end{proof}

\begin{proof}[Proof of Theorem \ref{thm:main}]
As explained in the introduction, this theorem follows from Theorem \ref{thm:maintilde} and the fact that \[\lgridhpm(\gd) = 0 \textrm{ iff } \lgridtpm(\gd) = 0\] for any grid diagram $\gd$, as in \eqref{eqn:zeroiff}.
\end{proof}

\begin{proof}[Proof of Corollary \ref{cor:lagfilling}]
A decomposable Lagrangian filling of $\llink$ is a decomposable Lagrangian cobordism from the empty link to $\llink$, and can thus  be described as a composition of elementary cobordisms starting with a birth. The rest of the filling is  therefore a decomposable Lagrangian cobordism from the $tb=-1$ Legendrian unknot $\llink_U$ to $\llink$. The corollary then follows from Theorem \ref{thm:main} combined with the fact that $\lgridhpm(\llink_U)\neq 0.$
\end{proof}

\begin{proof}[Proof of Corollary \ref{cor:stab}]
This follows immediately from Theorem \ref{thm:main} and the fact that $\lgridhpm$ vanishes for positive and negative Legendrian stabilizations, respectively, as in Proposition \ref{prop:stab}.
\end{proof}

\section{Examples}
\label{sec:examples}

In this section, we illustrate the effectiveness of Theorem \ref{thm:main} via examples, proving Theorem \ref{thm:examples} along the way.

\subsection{Examples of genus zero}\label{ssec:eg1} Let $K$ denote either one of the  (oriented) smooth knot types given by $m(10_{132})$ or $m(12n_{200})$. In \cite[Section 3]{NgOzsThu08:GRIDEffective}, Ng, Ozsv{\'a}th, and Thurston describe two Legendrian representatives $\llink_0$ and  $\llink_1$ of $K$ with \begin{equation}\label{eqn:tbrex}\tb(\llink_0)=\tb(\llink_1) = -1\textrm{ and } \rot(\llink_0)=\rot(\llink_1)=0,\end{equation} but \begin{equation}\label{eqn:genus01}\lgridhp(\llink_0)=0 \textrm{ and } \lgridhp(\llink_1) \neq 0.\end{equation} It follows that $\llink_0$ and $\llink_1$ are not Legendrian isotopic despite having the same classical invariants, by \cite{OzsSzaThu08:GRID}. These are among     the smallest crossing examples known that demonstrate the effectiveness of the GRID invariants in obstructing Legendrian isotopy.
 Ng, Ozsv{\'a}th, and Thurston further observed, using an  argument by Ng and Traynor from the proof of \cite[Proposition~5.9]{NgTra04:LegTorusLinks}, that these Legendrians are orientation reversals of one another, \[\llink_0=-\llink_1.\] (Note that $m(10_{132})$ and $m(12n_{200})$ are  reversible.) Thus, \cite[Proposition 1.2]{OzsSzaThu08:GRID} implies that \begin{equation}\label{eqn:genus02}\lgridhm(\llink_0) \neq 0 \textrm{ and }\lgridhm(\llink_1)=0\end{equation} as well. Combining \eqref{eqn:genus01} and \eqref{eqn:genus02} with Theorem \ref{thm:main}, we obtain  the following.
 
 \begin{proposition}
 \label{prop:genus0}
There is no decomposable Lagrangian concordance from $\llink_0$ to $\llink_1$ or from $\llink_1$ to $\llink_0$.
 \end{proposition}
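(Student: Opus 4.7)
The plan is to apply Theorem \ref{thm:main} directly, in both directions, using the vanishing and non-vanishing data for $\lgridhp$ and $\lgridhm$ recorded in \eqref{eqn:genus01} and \eqref{eqn:genus02}. A Lagrangian concordance is a (genus-zero) Lagrangian cobordism, so if it is decomposable then Theorem \ref{thm:main} applies verbatim.

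To rule out a decomposable Lagrangian concordance from $\llink_1$ to $\llink_0$, I would set $\llinkb = \llink_1$ and $\llinkt = \llink_0$. By \eqref{eqn:genus01} we have $\lgridhp(\llinkt) = \lgridhp(\llink_0) = 0$ while $\lgridhp(\llinkb) = \lgridhp(\llink_1) \neq 0$. The first bullet of Theorem \ref{thm:main} then immediately obstructs such a cobordism.

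For the reverse direction, I would set $\llinkb = \llink_0$ and $\llinkt = \llink_1$. Now \eqref{eqn:genus02} gives $\lgridhm(\llinkt) = \lgridhm(\llink_1) = 0$ while $\lgridhm(\llinkb) = \lgridhm(\llink_0) \neq 0$, and the second bullet of Theorem \ref{thm:main} obstructs a decomposable Lagrangian cobordism from $\llink_0$ to $\llink_1$.

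There is really no substantive obstacle here: all the nontrivial work is absorbed into Theorem \ref{thm:main} and into the GRID computations of \cite{NgOzsThu08:GRIDEffective} together with the orientation-reversal identity $\llink_0 = -\llink_1$ and \cite[Proposition 1.2]{OzsSzaThu08:GRID}, which swap $\lgridhp$ and $\lgridhm$ under orientation reversal. The only thing worth emphasizing is that the two cases use complementary GRID invariants: $\lgridhp$ obstructs the concordance in one direction and $\lgridhm$ obstructs it in the other, and it is precisely the combination $\llink_0 = -\llink_1$ that guarantees the data needed for both applications. Note also that the classical obstructions in \eqref{eqn:classicalobstruction} are vacuous here since \eqref{eqn:tbrex} gives $\tb(\llink_0) = \tb(\llink_1)$ and $\rot(\llink_0) = \rot(\llink_1)$, so effectiveness of the obstruction really does rely on the GRID invariants.
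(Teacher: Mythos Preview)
Your proof is correct and takes essentially the same approach as the paper's own proof: apply Theorem~\ref{thm:main} directly in each direction, using $\lgridhp$ to obstruct the concordance from $\llink_1$ to $\llink_0$ and $\lgridhm$ to obstruct the concordance from $\llink_0$ to $\llink_1$. The paper's proof is just a one-line version of what you wrote.
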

 
 \begin{proof}
 The first is obstructed by $\lgridhm$, the second by $\lgridhp$.
 \end{proof} 
Note that the Thurston--Bennequin and rotation numbers do not obstruct the existence of decomposable Lagrangian concordances between $\llink_0$ and $\llink_1$ via \eqref{eqn:classicalobstruction}.

\begin{remark}
One of the two directions in Proposition \ref{prop:genus0} is proven by Baldwin and Sivek \cite{BalSiv18:EqInv} and independently Golla and Juh{\'a}sz in \cite[Proposition 1.7]{GolJuh18:LOSSConc}. In particular, they use the equivalence between $\lgridhp$ and $\lossh$ to obstruct a decomposable Lagrangian concordance from $\llink_1$ to $\llink_0$.
\end{remark}

\subsection{Examples of genus one} Let $K$ denote  one of  $m(10_{132})$ or $m(12n_{200})$, and let $\llink_0$ and $\llink_1$ be the Legendrian representatives of $K$ discussed above. Front diagrams for these Legendrians are given in \cite[Figures 2 and 3]{NgOzsThu08:GRIDEffective}. By modifying these front diagrams for $\llink_0$ and $\llink_1$ first by a Legendrian Reidemeister I move, and then by adding a positive clasp, as  in Figure \ref{fig:modification}, we obtain  new Legendrian knots $\llink_0'$ and $\llink_1'$, respectively, whose front diagrams are shown in Figure \ref{fig:exclasp}. These two Legendrian knots belong to the smooth knot type  \[K' = \begin{cases}
m(12n_{199}),&\textrm{if }K = m(10_{132}),\\
m(14n_{5047}),& \textrm{if }K = m(12n_{200}).
\end{cases}
\]
(The  knot types were found using the program Knotscape by Hoste and Thistlethwaite \cite{HosThi99:Knotscape}.)
\begin{figure}[htbp]

  \includegraphics[width=6.3cm]{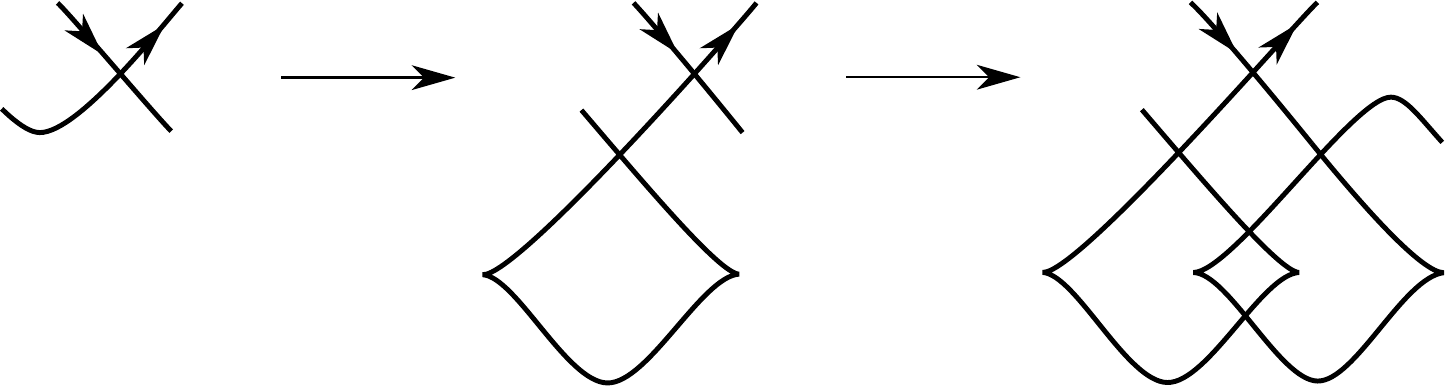}
  \caption{A local modification of the front diagram of $\llinki$.  The first move is a Legendrian Reidemeister I move; the second introduces a positive clasp.}
  \label{fig:modification}
\end{figure}

\begin{figure}[htbp]
   \labellist\tiny
  \pinlabel $X=\{13,6,11,7,1,2,3,4,10,12,8,9,5\}$ at 200 920
  \pinlabel $O=\{8,12,3,4,6,5,1,2,13,9,11,7,10\}$ at 200 880
  
    \pinlabel $X=\{13,8,11,9,3,4,5,6,10,2,7,12,1\}$ at 960 920
  \pinlabel $O=\{10,12,5,6,8,7,3,4,1,9,13,2,11\}$ at 960 880
  
    \pinlabel $X=\{15,8,13,9,6,7,1,2,3,4,12,14,10,11,5\}$ at 200 95
  \pinlabel $O=\{10,14,3,7,8,4,6,5,1,2,15,11,13,9,12\}$ at 200 55
  
    \pinlabel $X=\{15,10,13,11,8,9,3,4,5,6,12,2,7,14,1\}$ at 960 95
  \pinlabel $O=\{12,14,5,9,10,6,8,7,3,4,1,11,15,2,13\}$ at 960 55

    \endlabellist
  \includegraphics[width=10.5cm]{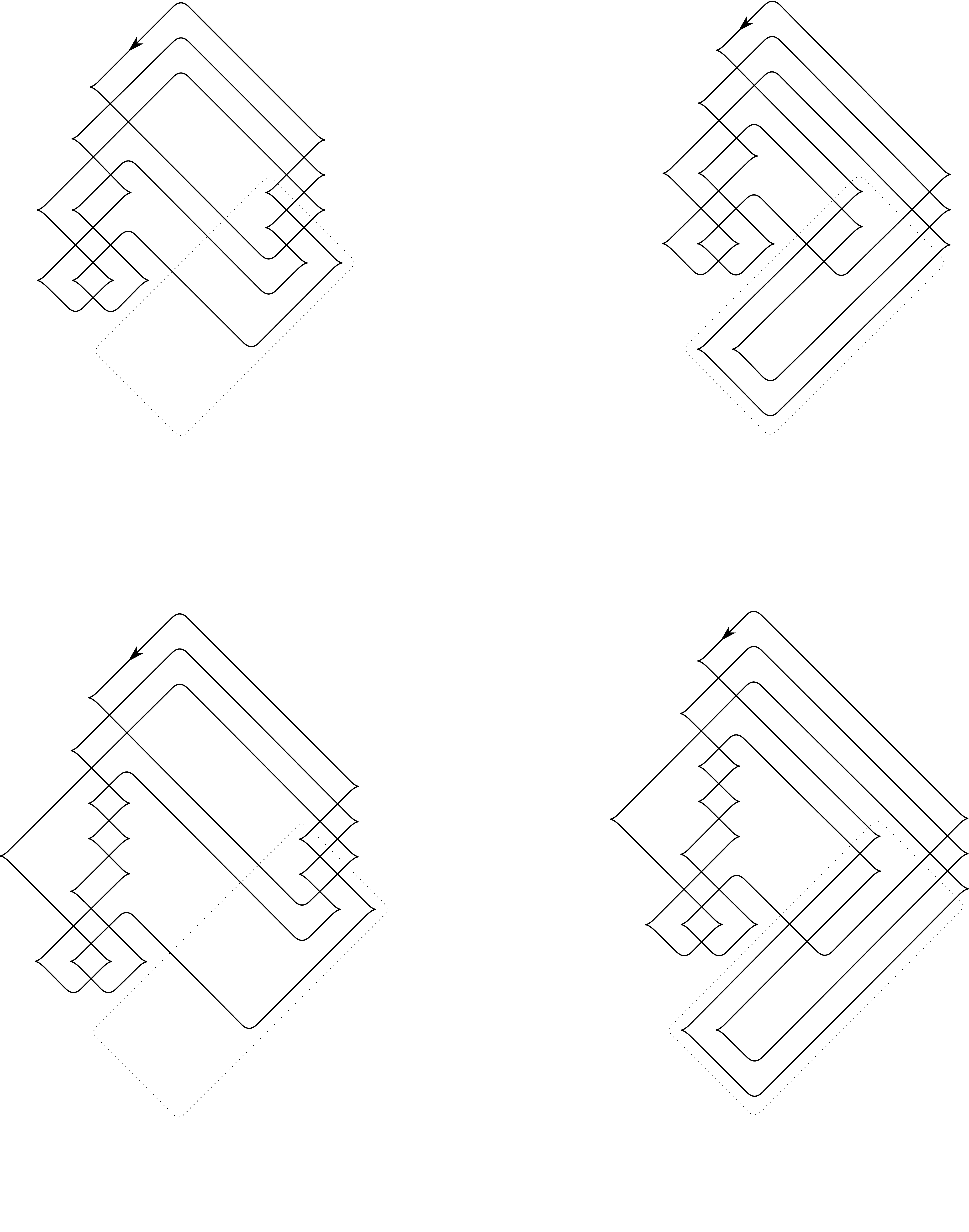}
  \caption{Top, two Legendrian representatives of $m(12n_{199})$. Bottom, two Legendrian representatives of $ m(14n_{5047}).$ In each case, $\llink_0'$ is shown on the left and $\llink_1'$ on the right. Note that  $\llink_0'$ and $\llink_1'$ differ only in the dashed boxes. We have also included the XO coordinates of the corresponding grid diagrams.}
  \label{fig:exclasp}
\end{figure}

Observe that  there is a smooth cobordism of genus one between $K$ and $K'$ since the latter is obtained from the former via the addition of a positive clasp. Moreover, it is easy to see that the local modification  in Figure \ref{fig:modification} increases the Thurston--Bennequin number by $2$ and preserves the rotation number. Combined with \eqref{eqn:tbrex}, this implies that for any $i,j\in\{0,1\}$, \[\tb(\llink_i')=\tb(\llink_j)+2 \textrm{ and } \rot(\llink_i')=\rot(\llink_j).\] In particular, the classical invariants and smooth topology do not obstruct the existence of a decomposable genus one Lagrangian cobordism from $\llink_0$ to $\llink_1'$ or from $\llink_1$ to $\llink_0'$.

However, a  direct computer calculation using the  program \cite{MeyQuaRob19:TransverseHFK2},\footnote{This program is a newer version of the program written by Ng, Ozsv{\'a}th, and Thurston \cite{NgOzsThu07:TransverseHFK}, with minor bug fixes and improvements in computational efficiency.} applied to the diagrams in Figure \ref{fig:exclasp}, shows that \[\lgridhp(\llink_0')=0 \textrm{ and }\lgridhp(\llink_1')\neq 0.\] The  argument  used  by Ng, Ozsv{\'a}th, and Thurston to show that $\llink_0=-\llink_1$  shows that $\llink_0'$ and $\llink_1'$ are also orientation reversals of one another, which then implies that \[\lgridhm(\llink_0')\neq 0 \textrm{ and }\lgridhm(\llink_1')= 0.\] These calculations, combined with Theorem \ref{thm:main}, lead immediately to the following.

 \begin{proposition}
 \label{prop:genus1}
There is no decomposable Lagrangian cobordism from $\llink_0$ to $\llink_1'$ or from $\llink_1$ to $\llink_0'$.
 \end{proposition}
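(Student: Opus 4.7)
The plan is to apply Theorem \ref{thm:main} directly, since by the time we reach this proposition the vanishing/nonvanishing data for $\lgridhpm$ on all four Legendrian knots $\llink_0,\llink_1,\llink_0',\llink_1'$ has already been assembled in the preceding discussion. So the proof will consist of identifying, for each of the two putative cobordisms, which of $\lgridhp$ or $\lgridhm$ provides the obstruction, and then citing Theorem \ref{thm:main}.

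Concretely, for the alleged decomposable cobordism from $\llinkb=\llink_0$ to $\llinkt=\llink_1'$, I would use the \emph{minus} GRID invariant: we have already noted that $\lgridhm(\llink_0)\neq 0$ (from the orientation-reversal relation $\llink_0=-\llink_1$ applied to $\lgridhp(\llink_1)\neq 0$), and that $\lgridhm(\llink_1')=0$ (from the same relation $\llink_0'=-\llink_1'$ applied to the computer calculation that $\lgridhp(\llink_1')\neq 0$ --- wait, one has to be careful: $\llink_0'=-\llink_1'$ together with $\lgridhp(\llink_1')\neq 0$ gives $\lgridhm(\llink_0')\neq 0$, and $\lgridhp(\llink_0')=0$ gives $\lgridhm(\llink_1')=0$). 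So the first bullet of Theorem \ref{thm:main} fires and no decomposable cobordism exists.

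For the alleged decomposable cobordism from $\llinkb=\llink_1$ to $\llinkt=\llink_0'$, I would use the \emph{plus} GRID invariant instead: we have $\lgridhp(\llink_1)\neq 0$ (given from \cite{NgOzsThu08:GRIDEffective}) and $\lgridhp(\llink_0')=0$ (from the computer calculation using \cite{MeyQuaRob19:TransverseHFK2} on the grid diagram in Figure \ref{fig:exclasp}). So the second bullet of Theorem \ref{thm:main} fires.

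There is essentially no obstacle to this plan --- it is a completely mechanical application of the main theorem --- but the one step that requires care is justifying the orientation-reversal identifications $\llink_0'=-\llink_1'$. For this I would simply observe that the local modification of Figure \ref{fig:modification} is invariant (up to Legendrian isotopy) under orientation reversal of the ambient knot, so the Ng--Traynor argument cited in \cite[proof of Proposition 5.9]{NgTra04:LegTorusLinks} that establishes $\llink_0=-\llink_1$ carries over verbatim to the primed versions. Combined with \cite[Proposition 1.2]{OzsSzaThu08:GRID}, this propagates the computer calculation of $\lgridhp$ on $\llink_0',\llink_1'$ to the corresponding statement about $\lgridhm$, completing the input needed for Theorem \ref{thm:main}.
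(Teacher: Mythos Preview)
Your proposal is correct and takes essentially the same approach as the paper: both simply invoke Theorem~\ref{thm:main}, obstructing $\llink_0\to\llink_1'$ via $\lgridhm$ and $\llink_1\to\llink_0'$ via $\lgridhp$, using the vanishing/nonvanishing data already assembled (including the orientation-reversal identification $\llink_0'=-\llink_1'$, which the paper also records just before the proposition). One trivial slip: you have the two bullets of Theorem~\ref{thm:main} swapped---the $\lgridhp$ condition is the first bullet and the $\lgridhm$ condition is the second---but this does not affect the argument.
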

 
 \begin{proof}
 The first is obstructed by $\lgridhm$, the second by $\lgridhp$.
 \end{proof} 

\subsection{An infinite family}
\label{ssec:infinite}
Let $K$ be  one of  the  knot types $m(10_{145})$, $m(10_{161})$, or $12n_{591}$.  In \cite[Proposition~6]{ChoNg13:LegAtlas}, Chongchitmate and Ng provide  two Legendrian representatives $\llink_0$ and $\llink_1$ of $K$ (denoted by $L_1$ and $L_3$ there) satisfying \begin{equation}\label{eqn:tbinf}\tb(\llink_0)=\tb(\llink_1)+2 \textrm{ and } \rot(\llink_0) = \rot(\llink_1)\end{equation} and \begin{equation}\label{eqn:gridinf}\lgridhp(\llink_1)\neq 0 \textrm{ and } \lgridhm(\llink_1)\neq 0.\end{equation} 
Fix any positive crossing  in the front diagram for $\llink_0$. Let $\llink_0'$ be the Legendrian knot representing the smooth knot type $K'$ obtained from $\llink_0$ by first performing a Legendrian Reidemeister I move near this crossing, and then adding $m$ positive clasps, as  in Figure \ref{fig:clasps}. It is easy to see that this local modification increases the Thurston--Bennequin number by $2m$ and preserves rotation number, \begin{equation}\label{eqn:tbrinf}\tb(\llink_0')=\tb(\llink_0)+2m \textrm{ and } \rot(\llink_0')=\rot(\llink_0).\end{equation} This implies, by \eqref{eqn:tbrstab} combined with \eqref{eqn:tbinf} and \eqref{eqn:tbrinf}, that \begin{equation}\label{eqn:tbrinf2}\tb(S_+(S_-(\llink_0')))= \tb(\llink_1)+2m \textrm{ and }\rot(S_+(S_-(\llink_0')))=\rot(\llink_1).\end{equation}

\begin{figure}[htbp]
  \includegraphics[width=8.5cm]{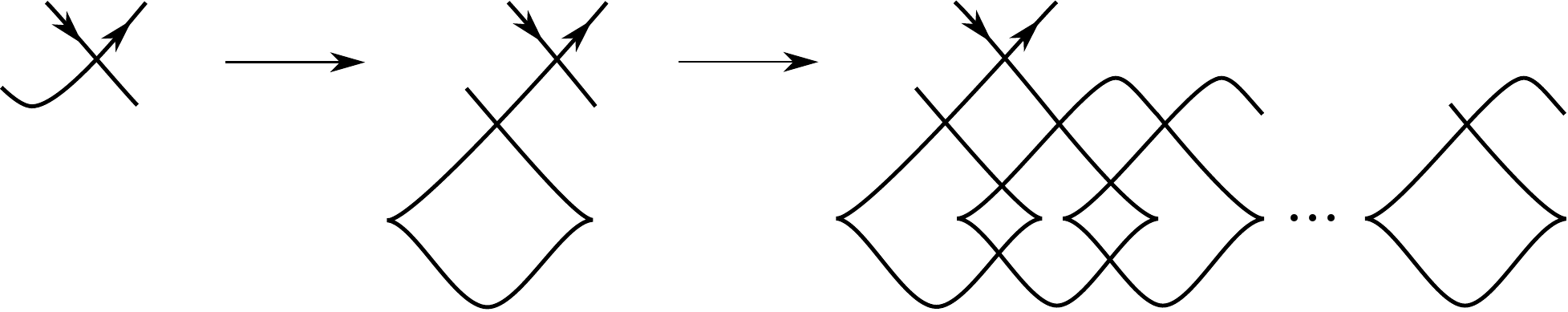}
  \caption{A  modification of the front diagram of $\llink_0$ near a positive crossing.  The first move is a Legendrian Reidemeister I move; the second introduces $m$ positive clasps.}
  \label{fig:clasps}
\end{figure}

With this, we may now prove Theorem \ref{thm:examples}.

\begin{proof}[Proof of Theorem \ref{thm:examples}] Let us adopt the notation from above. Take $g=m$ and let \[\llinkb = \llink_1 \textrm{ and } \llinkt=S_+(S_-(\llink_0')).\] The first is a Legendrian representative of $K$, the second of $K'$, and there is a smooth genus $g$ cobordism from $K$ to $K'$ since $K'$ is obtained from $K$ by adding $g$ clasps, fulfilling the first bullet point of the theorem. The second bullet point is fulfilled by \eqref{eqn:tbrinf2}. Finally, \[\lgridhp(\llinkt)=0\] by Proposition \ref{prop:stab} since $\llinkt$ is a positive Legendrian stabilization, and \[\lgridhp(\llinkb)\neq 0\] by \eqref{eqn:gridinf}, fulfilling the third bullet point of the theorem.
\end{proof}

\subsection{DGA versus GRID}
\label{ssec:dgagrid} We assume below that the reader is familiar with the 
Chekanov--Eliashberg DGA; for a survey, see \cite{EtnyreNg}. 

As mentioned in the introduction, an exact Lagrangian cobordism from $\llinkb$ to $\llinkt$ induces a DGA morphism \cite{EHK} \[(\mathcal{A}_{\llinkt},\partial_{\llinkt})\to(\mathcal{A}_{\llinkb},\partial_{\llinkb}),\] so if the first DGA is trivial while the second is not then there cannot be such a cobordism. The DGA is trivial for stabilized Legendrians \cite{Chekanov}, so, as noted in Section \ref{ssec:examples}, this functoriality cannot obstruct decomposable Lagrangian cobordisms between stabilizations of the examples in Sections \ref{ssec:eg1}-\ref{ssec:infinite}, while the GRID invariants can.



We mentioned in Section \ref{ssec:examples} that there are also cases in which the DGA obstruction applies where the GRID obstruction does not. For example, there is a Legendrian representative $\llinkb$ of the figure eight  with \[\tb(\llinkb)=-3\textrm{ and } \rot(\llinkb)=0,\] such that $(\mathcal{A}_{\llinkb},\partial_{\llinkb})$ admits an \emph{augmentation}, and is therefore nontrivial (see e.g.\ \cite{ChoNg13:LegAtlas}). This example was pointed out to the authors by Steven Sivek. Now, let $\llinkt$ be the Legendrian representative of the right-handed trefoil with \[\tb(\llinkt)=-1\textrm{ and } \rot(\llinkt)=0,\] obtained by stabilizing the $\tb=1$ representative twice, once with each sign, so that $(\mathcal{A}_{\llinkt},\partial_{\llinkt})$ is trivial. These DGAs  therefore obstruct an exact Lagrangian cobordism  from $\llinkb$ to $\llinkt$. There is a smooth genus one cobordism from the figure eight to the right-handed trefoil, as indicated in Figure \ref{fig:dgagrid}, so the classical invariants and smooth topology do not obstruct such a cobordism. On the other hand, the  figure eight has trivial Heegaard Floer tau-invariant, so that \[\tb(\llinkb)+|\rot(\llinkb)|<2\tau(\llinkb)-1.\] Since the figure eight is also \emph{thin},  this implies that the GRID invariants of $\llinkb$ vanish \cite[Proposition 3.4]{NgOzsThu08:GRIDEffective}, and therefore do not obstruct a decomposable Lagrangian cobordism from $\llinkb$ to $\llinkt$ via Theorem \ref{thm:main}.

\begin{figure}[htbp]
 \labellist
   \pinlabel $=$ at 99 30
    \endlabellist
  \includegraphics[width=7cm]{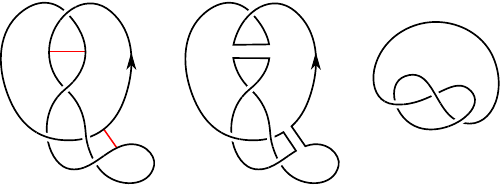}
  \caption{Two oriented band moves certifying the existence of a genus one cobordism between the figure eight knot and the right-handed trefoil.}
  \label{fig:dgagrid}
\end{figure}

Finally, in the interest of completeness, we remark that the DGAs of the  Legendrian representatives of $m(10_{132})$, $m(12n_{200})$, $m(10_{145})$, $m(10_{161})$, and $12n_{591}$, which served as examples of $\llinkb$ in Sections \ref{ssec:eg1}-\ref{ssec:infinite} admit no augmentations and therefore no linearized Legendrian contact homologies. Indeed, Rutherford showed in \cite{Rutherford} that the Kauffman polynomial bound on $\tb (\llink)$ is sharp if and only if $(\mathcal{A}_{\llink}, \partial_{\llink})$ admits an augmentation, and one can check on KnotInfo \cite{KnotInfo} that the Kauffman bounds are not sharp for the Legendrians above. This does  not completely rule out the possibility that their DGAs are nontrivial   (and could therefore perhaps obstruct the Lagrangian cobordisms we are considering), but it  eliminates  the most  tractable approach to proving nontriviality. Pan proved in \cite[Theorem 1.6]{Pan} that if there exists an exact Lagrangian cobordism (with Maslov number 0) from $\llinkb$ to $\llinkt$, then the number of graded augmentations of $\llinkb$ up to a certain equivalence is less than or equal to the number of graded augmentations of $\llinkt$ up to equivalence.  The fact that the Legendrians above admit no augmentations at all also rules out the possibility of applying Pan's more refined obstruction in these examples.

\bibliographystyle{mwamsalphack}
\bibliography{bibliography}

\end{document}